\documentclass[12pt,leqno]{amsart}
\usepackage{amssymb,amsmath,amsthm,bm,scalerel,stackengine}
\usepackage{amssymb,amsmath,amsthm,bm,scalerel,stackengine}
\usepackage{graphicx}
\usepackage{color}
\usepackage[textsize=tiny]{todonotes}
\usepackage[normalem]{ulem}
\usepackage[bookmarksopen,bookmarksdepth=3,colorlinks,citecolor=red,pagebackref,hypertexnames=true]{hyperref}
\usepackage[msc-links,nobysame,non-sorted-cites, initials]{amsrefs}
\usepackage[inline]{enumitem}
\usepackage{mathtools}
\mathtoolsset{showonlyrefs}
\usetikzlibrary{quotes,arrows.meta}

\definecolor{darkblue}{RGB}{0,0,160}

\usepackage[lining]{libertine}   
\usepackage{cabin}
\usepackage[libertine]{newtxmath}

\usepackage[T1]{fontenc}

\def\eps{\varepsilon}
\def\d{{\rm d}}
\def\R {\mathbb{R}}
\def\N {\mathbb{N}}

\def\supp {{\mathrm{supp}\,}}
\def\Z {{\mathbb Z}}
\def\C {{\mathcal C}}

\newcommand{\TT}{\mathbb{ T} }

\newcommand{\cic}{\bm}
\newcounter{counter}

\numberwithin{equation}{section}
\numberwithin{counter2}{section}
\newtheorem{proposition}[subsection]{Proposition}
\newtheorem{theorem}[counter]{Theorem}

\newtheorem{corollary}[counter]{Corollary}
\newtheorem{lemma}[subsection]{Lemma}

\theoremstyle{definition}
\newtheorem{definition}[subsection]{Definition}
\newtheorem*{remark*}{Remark}
\newtheorem*{warn*}{A word of warning}

\theoremstyle{plain}

\newcommand{\floor}[1]{\left\lfloor #1 \right\rfloor}
\newcommand{\ceiling}[1]{\left\lceil #1 \right\rceil}
\usepackage{scalerel}

\newcommand{\beqq}{\begin{align*}}
	\newcommand{\eeqq}{\end{align*}}

\author[A. Fragkos]{Anastasios Fragkos}
\author[B. Krause]{Ben Krause}
\author[M. Lacey]{Michael Lacey}
\title{Endpoint Estimates for Stein's Purely Quadratic Carleson Operator}
\begin{document}
	\maketitle
    \begin{abstract}
        	We study the near $L^1$ behavior of the maximally quadratically modulated Hilbert transform \[ \C_2f(x)  \coloneqq \sup_{\lambda} \left|\operatorname{p.v.} \int_{\R}f(x-y)e^{2 \pi i \lambda y^2} \frac{\d y }{y} \right| \] and its lacunary counterpart obtained by restricting $\lambda$ to $2^{\Z}.$
			We prove that if $\Phi$ is a Young function satisfying $	\Phi(t)=o\bigl(t\log_2t\bigr)$ then $\C_{2,\mathsf{lac}}$, and therefore $\C_2$ as well, does not satisfy a corresponding $\Phi$-modular estimate. In particular, neither the lacunary nor the full quadratic operator is of weak type $(1,1)$. In the positive direction, we establish an $L\log_1 L$ modular estimate for $\C_2$ and a $L (\log_2L)^2 \log_4L$  estimate for $\C_{2,\mathsf{lac}}.$
    \end{abstract}

    \section{Introduction}The maximally monomially modulated Hilbert transform
	\[
	\mathcal C_m f(x)
	=
	\sup_{\lambda\in\mathbb R}
	\left|
	\operatorname{p.v.}\int_{\mathbb R}
	f(x-y)e^{2 \pi i\lambda y^m}\frac{dy}{y}
	\right|
	\]
	belongs to the family of maximally modulated singular integrals arising from Carleson's theorem. For $m=1$, it is essentially the classical Carleson operator \cite{Carleson1966}. For $m\geq 2$, it is a special case of an operator considered by Stein and Wainger \cite{SteinWainger2001}, who proved $L^p$ bounds for maximal singular integrals modulated by polynomial phases with no linear term. The more difficult problem of allowing linear and higher-order terms simultaneously led to the polynomial Carleson problem: the quadratic case and the full one-dimensional polynomial Carleson conjecture were proved by Lie \cites{Lie09,Lie2020}, and subsequently the higher-dimensional case  (with more general Calder\'{o}n-Zygmund kernels) was established by Zorin-Kranich \cite{ZORINKRANICH2021107832}. Highly restricted discrete analogues of $\C_2$ were treated by two of us \cite{KL17}, with the full $\mathcal{C}_m$ operators treated by one of us and Roos \cites{KrauseRoos2022,KR23}, and finally the discrete analogue of the full Stein-Wainger operator in all dimensions was addressed by the second named author \cite{Krause24}. 
	
	While the $L^p$ behavior of the aforementioned operator is well established in the reflexive range, the precise endpoint behavior of $\{ \C_m \}$ are not known to date. The question of $\C_{1}$ is the most famous, and relates to the issue of pointwise convergence of Fourier series near $L^1$
   with the state of the art result due to the first author, joint with Di Plinio \cite{DPF25}; but $\mathcal{C}_1$ is a completely different type of operator than $\{ \mathcal{C}_m, \ m \geq 2 \}$, due to its modulation invariant, as opposed to oscillatory, nature, and so the analysis below will not extend to this case. With this in mind, we state the main results of this article; while it seems likely that our arguments below will extend to the case of $m \geq 2$, our focus is $\C_2.$  Our first result establishes the failure of any weak (modular) Orlicz type bound on $\C_2$ whenever the pertaining Young function grows slower than $ t \log_2(t)$ as $t \to \infty;$ here and throughout we define
    \begin{align}
        \log_1(t) := \log(10 + t), \qquad \log_j(t) := \log_1 \big( \log_{j-1}(t) \big).
    \end{align}
    In particular, this implies that $\C_2$ cannot be of weak-type (1,1). In fact, we show something stronger: we prove that the aforementioned phenomenon persists even if one restricts the set of modulation parameters to $2^{\Z}.$ 
	
	\begin{theorem} \label{t:loglogfail} Suppose $\Phi(t)=o(t \log_2 t)$. Then $\C_{2,\mathsf{lac}}$ does not satisfy a $\Phi$-modular estimate. In particular, for any $0 < \kappa < 1$, there exists an $f_\kappa$ so that
    \begin{align}
        \int_{\mathbb{R}} \Phi(|f_{\kappa}(x)|) \d x < \kappa \cdot |\{ x :  \C_{2,\mathsf{lac}}\left(f_\kappa(x)\right) > 1\}|.
    \end{align}
	\end{theorem}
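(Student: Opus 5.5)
We test the modular inequality on a sum of many well-separated point masses (smoothed at a tiny scale). The point is that $\C_{2,\mathsf{lac}}$ acting on a single bump behaves like a maximal function with an extra $\log$ gain coming from the choice of $\lambda$. A sum of $N$ bumps then makes the level set have measure of order $N\log N$ in units of the bump mass, whereas $\int \Phi(|f|)$ only sees about $N$ bumps of large height.

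\textbf{Step 1: single bump.} Take $f=\delta^{-1}\mathbf 1_{[0,\delta]}$, an approximate delta. For $|x|\gg\delta$ we have
\[
\operatorname{p.v.}\int f(x-y)e^{2\pi i\lambda y^2}\frac{\d y}{y}\approx \frac{e^{2\pi i \lambda x^2}}{x}.
\]
This gives only $1/|x|$ decay, so no gain yet.

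\textbf{Step 2: exploit the phase.} Use $N$ point masses at positions $a_1,\dots,a_N$, each of mass $1/N$ (or equal masses), smoothed at a scale $\delta\to 0$. Then
\[
\C_{2,\mathsf{lac}} f(x)\gtrsim \sup_{k}\frac1N\Bigl|\sum_j \frac{e^{2\pi i 2^k (x-a_j)^2}}{x-a_j}\Bigr| .
\]
The plan is to choose the $a_j$ as a random or arithmetic configuration so that, for a set of $x$ of measure $\gtrsim \log N$ (in units where the total mass is $1$), some dyadic $\lambda=2^k$ aligns the phases $2^k(x-a_j)^2 \bmod 1$ for a positive proportion of the $j$ with $|x-a_j|\sim |x|$. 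This makes the sum comparable to $\sum_j 1/|x-a_j|$ rather than its square-function size.

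The alignment is obtained by expanding
\[
2^k(x-a_j)^2=2^kx^2-2^{k+1}xa_j+2^ka_j^2 .
\]
Take $a_j\in \epsilon\Z$ with $2^ka_j^2\in\Z$. The first term is common to all $j$ and does not affect the modulus. So only the term $2^{k+1}x a_j$ needs to be near an integer, which is a lacunary Dirichlet/Kronecker-type condition: for generic $x$ and some $k$ in a range of length $\sim\log N$, the sequence $\{2^{k+1}x a_j\}$ is close to $0$ for many $j$.

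\textbf{Step 3: counting.} Compare the two sides. The level set $\{\C_{2,\mathsf{lac}}f>1\}$ should then have measure $\gtrsim t\log t$ when $\|f\|_\infty\sim t$, with $\int\Phi(|f|)\sim \Phi(t)|\supp f|$ and $|\supp f|\sim 1$. This would give $|\{\C_{2,\mathsf{lac}} f>1\}|\gtrsim \log_2 t\cdot t\,|\supp f|$ after rescaling. (The $\log_2$ rather than $\log$ reflects that only $\log t$ lacunary scales are available, and the gain is logarithmic in their number.) Using $\Phi(t)=o(t\log_2 t)$ and choosing $t$ large then yields, for any $\kappa$, a function $f_\kappa$ violating the inequality in the theorem. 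By the scaling invariance of $\C_{2}$ in the form $x\mapsto rx$, $\lambda\mapsto \lambda r^{-2}$ (restricted to $r\in 2^{\Z/2}$ for the lacunary version), we may normalize the threshold to $1$.

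\textbf{Main obstacle.} The hardest step is Step 2, the phase alignment with only lacunary $\lambda$. We must show that for a large measure of $x$ there exists $k$ making many phases coherent simultaneously. The first attempt would be a probabilistic argument: take the $k$'s as independent tests, so that the events ``$2^{k}x a_j$ near integers for a $\gtrsim$ fraction of $j$'' are roughly independent across $k$ by lacunarity (a Rademacher/Erd\H{o}s-type independence of $\{2^k x\}$). Among $\log t$ trials one succeeds with sizeable probability, and this produces exactly the $\log_2 t$ gain. Making the independence quantitative is where the argument will require care.
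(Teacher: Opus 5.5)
\textbf{There is a genuine gap, in Step 2: the mechanism you describe produces no logarithmic gain.}

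If $|x-a_j|\sim|x|$ for most $j$, so that $x$ is far from the configuration, then even with perfectly aligned phases the sum is $\approx 1/|x|$. That is the same as for a single point mass, and $\{1/|x|>\alpha\}$ has measure $2\|f\|_1/\alpha$.

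If instead $x$ lies inside the configuration, coherent phases $e(2^k(x-a_j)^2)\approx\omega$ turn your sum into $\omega\cdot\frac1N\sum_j\frac{1}{x-a_j}$. This is the Hilbert transform of a positive measure of mass one. Boole's identity $|\{x:\sum_j c_j/(x-a_j)>\alpha\}|=\alpha^{-1}\sum_j c_j$ (for $c_j>0$) shows its level sets have measure $\lesssim 1/\alpha$, so again there is no gain.

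Concretely, take $a_j=j$, $x=k+\tau$ and $\lambda\in 2^{\N}$. The phase of the $j$-th term is $e(\lambda x^2)e(-2\lambda k\tau)\,e(-n\theta)$ with $n=j-k$ and $\theta=2\lambda\tau$.
\begin{itemize}
\item The set of $n$ with $\|n\theta\|_{\TT}$ small is symmetric under $n\mapsto-n$, so the aligned terms on the two sides of $x$ cancel.
\item In fact $|\sum_{n=1-k}^{N-k}e(-n\theta)/(\tau-n)|=O(1)$ uniformly in $\theta$ when $k$ is in the bulk. The symmetric part is a partial sum of $\sum_n\sin(2\pi n\theta)/n$, and the asymmetric tail is $O(1+\log\frac{\max(k,N-k)}{\min(k,N-k)})$.
\item The sum reaches $c\log N$ only for $x$ within distance $\lesssim N^{1-c}$ of the ends of the configuration. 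That set is far smaller than the required measure $\gg N/\log N$.
\end{itemize}
Since all your bumps have the same width $\delta$, a lacunary $\lambda$ either sees all of them as point masses ($\lambda\delta\ll1$) or kills all of them ($\lambda\delta\gg1$). So nothing in your construction can break this symmetry.

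Three further problems:
\begin{itemize}
\item The single-bump ``extra log gain'' in your opening paragraph does not exist.
\item Step 3's claimed level set of size $t\log t$, when $|f|\sim t$ on a set of measure $\sim1$, actually contradicts the lacunary bound of Theorem~\ref{t:LlogL}. It is also inconsistent with your next sentence, which claims $t\log_2 t$.
\item ``One of $\log t$ trials succeeds'' only shows that alignment happens somewhere; it creates no multiplicative factor.
\end{itemize}

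\textbf{The missing idea is a one-sided truncation.} The paper obtains it from bumps of wildly different widths: $\chi_N=\frac1N\sum_{j=1}^N\varphi_{Q_j}$ with $Q_j=j+2^{-Nj}[-\frac12,\frac12]$.
\begin{itemize}
\item For $\lambda\in[2^{Nk}A,2^{N(k+1)}A)\cap2^{\N}$, the bumps with $j\le k$ satisfy $\lambda\ell_{Q_j}\ge A$ and are negligible by non-stationary phase. The bumps with $j>k$ behave like point masses.
\item For $x=k+\tau$ with $\tau\in\mathbf{B}(\lambda,2^{-100}/N)$, all surviving phases $e(2\lambda j\tau)$ are $\approx1$. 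What remains is $\frac1N\sum_{j>k}\frac{1}{k+\tau-j}$, which has size $\frac{\log N}{N}$ precisely because only bumps to the right of $x$ contribute.
\item The rigorous form of your independence heuristic is the second-moment bound, via the $\gcd$ intersection estimate, for the union of these Bohr sets over the $N$ dyadic $\lambda$ in the block; that union has measure $\gtrsim1$. Aligning $\sim N$ phases forces radius $\sim1/N$, hence $\sim N$ frequencies, not $\log N$.
\item Summing over $k\in[0.6N,0.7N]$ gives a level set of measure $\gtrsim N$ at height $\sim\log N/N$.
\item The widths $2^{-Nj}$ make the normalized heights $\sim2^{Nj}/\log N$, whose $\log_2$ is $\sim\log N$. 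This gives $\int\Phi(N\chi_N/\log N)=o(N)$.
\end{itemize}
This is where the $\log_2$ genuinely comes from, and your equal-width construction has no analogue of it.
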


	In the opposite direction, we are able to establish certain modular estimates for the following near-$L^1$ spaces.
	\begin{theorem} \label{t:LlogL} 
		\[  \left| \left\{\C_2 f > \alpha\right\} \right| \lesssim \int_{\R} \frac{|f(x)|}{\alpha} \log_{1}\left ( \frac{|f(x)|}{\alpha}\right ) \d x \] and \[ \left| \left\{ \C_{2,\mathsf{lac}}f >\alpha   \right\}\right| \lesssim \int_{\R}  \frac{|f(x)|}{\alpha} \log_2\left(\frac{|f(x)|}{\alpha}\right)^2 \log_4 \left(\frac{|f(x)|}{\alpha}\right) \d x  \]
	\end{theorem}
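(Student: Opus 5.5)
By homogeneity, replacing $f$ by $f/\alpha$, we may take $\alpha=1$. We then perform a Calder\'on--Zygmund-type decomposition of $f$ at height $1$ adapted to the Orlicz norm. Write $f=g+\sum_{I} b_I$, where $\|g\|_\infty\lesssim 1$, $\|g\|_1\lesssim\|f\|_1$, and the $b_I$ are supported on disjoint dyadic intervals $I$ with $\sum|I|\lesssim \|f\|_1$. The good part is handled by the $L^2$ bound of Stein--Wainger: $|\{\C_2 g>1/2\}|\lesssim\|g\|_2^2\lesssim\|f\|_1$. We discard the enlarged exceptional set $\bigcup 3I$, whose measure is $\lesssim\|f\|_1$. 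It remains to control $\C_2(\sum_I b_I)$ off $\bigcup 3I$. Here the oscillatory (not modulation-invariant) nature of the phase $\lambda y^2$ enters. For $x\notin 3I$ and $|y|\sim 2^k\gtrsim|I|$, the phase $\lambda y^2$ varies across $I$ by about $|\lambda|2^k|I|$.

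The plan is to split the supremum in $\lambda$ according to scales. For fixed $I$ and $x$ at distance $\sim2^k$, there are two regimes. In the \emph{nonoscillatory} regime $|\lambda|2^k|I|\lesssim 1$, the modulated kernel is smooth on $I$ at scale $|I|$, so the usual mean-zero cancellation of $b_I$ (we subtract averages) gives an $L^1$-summable error bounded by a Hörmander-type integral. In the \emph{oscillatory} regime $|\lambda|2^k|I|\gg1$, we cannot use cancellation. Instead we use that the set of $\lambda$ is effectively parametrized, and estimate a square function / maximal operator in $\lambda$ through a $TT^*$ or Sobolev embedding argument in the $\lambda$ variable. Linearizing $\lambda=\lambda(x)$ and using van der Corput, the operator $b\mapsto \int b(x-y)e^{i\lambda y^2}\mathbf 1_{|y|\sim 2^k}\,dy/y$ should gain a decay like $(|\lambda|2^{2k})^{-1/2}$ in $L^2$. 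Converting this to $L^1\to L^{1,\infty}$ requires paying for the number of effective scales. Here the $L\log L$ loss arises: we decompose $b_I$ by level sets $|f|\sim 2^j$ and accept a logarithmic loss $j$ in summing scales, which sums to $\int|f|\log_1|f|$. Alternatively, and perhaps more cleanly, we would prove a restricted-type bound $|\{\C_2\mathbf 1_E>1\}|\lesssim |E|\log(e/|E|)$-style estimates on each level set and then sum over layers $f\approx 2^j$ via an extrapolation argument (Yano/Tao's restricted weak-type extrapolation near $L^1$). This yields $L\log L$ from $L^p$ bounds with norms $\lesssim (p-1)^{-1}$ as $p\to1$. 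So concretely, the first thing to try is to show $\|\C_2\|_{L^p\to L^p}\lesssim (p-1)^{-1}$, by interpolating the $L^2$ bound with a weak-$L^1$-type estimate for each fixed oscillatory scale piece, and then applying Yano extrapolation.

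For $\C_{2,\mathsf{lac}}$, the $\lambda$-supremum is over $2^{\Z}$. For fixed $x$ and scale $2^k$, only $O(1)$ values of $\lambda\in2^\Z$ lie in the transitional band $|\lambda|2^{2k}\sim1$, and in the oscillatory regime we have geometric decay. The difficulty is that summation over $\lambda\in 2^\Z$ is still a supremum over infinitely many operators, which costs a Rademacher--Menshov/Chang--Wilson--Wolff-type $\log$ of the number of relevant $\lambda$. I would bound it by comparing with the $\sup$ over $N$ lacunary frequencies, which costs $\log N$ in $L^2$ by Rademacher--Menshov and $\sqrt{\log N}$ via Chang--Wilson--Wolff. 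Then I would choose $N\approx\log|f|$ on each level set, giving losses in $\log_2$. Combined with square-summation over the level-set layers, this yields the $(\log_2)^2\log_4$ factor, where the $\log_4$ arises from the last summation of a Chang--Wilson--Wolff exponential tail.

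The main obstacle is the bad-part estimate in the oscillatory regime. We need a bound of the form $|\{\sup_\lambda|\sum_I T_\lambda b_I|>1\}|\lesssim\sum_I\|b_I\|_{L\log L(I)}$ without access to the cancellation of $b_I$. I would first attempt this through a $TT^*$ argument for the linearized operator on $L^2$ of a level set, exploiting the van der Corput decay $(|\lambda|2^{2k})^{-1/2}$.
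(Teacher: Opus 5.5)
Your outline has the right ingredients (Calder\'on--Zygmund decomposition, layering $f$ by height, $L^2$ decay for oscillatory pieces, cancellation for the rest). But the step you treat as routine fails, and the step you flag as the main obstacle is the easy one.

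\textbf{The nonoscillatory regime is not $L^1$-summable.} In the regime $|\lambda|2^k\ell_I\lesssim1$, cancellation of $b_I$ at $|x-c_I|\sim2^k$ gives only $\|b_I\|_1\ell_I(|\lambda|+2^{-2k})\lesssim\|b_I\|_1 2^{-k}$. This bound is attained: $\lambda\simeq(|x-c_I|\ell_I)^{-1}$ lies in your regime and turns $e(\lambda(x-y)^2)$ into a modulation of frequency $\simeq\ell_I^{-1}$ across $I$. Hence $\sup_\lambda|\C_2^\lambda b_I(x)|\gtrsim\|b_I\|_1/|x-c_I|$ for a generic mean-zero bump, and your H\"ormander integral diverges. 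You do pay a logarithm per layer, but you attach it to the oscillatory regime, where nothing needs to be paid.

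\textbf{How the paper does it.} It splits by the oscillation index $r$ of the kernel, not by an atom-dependent condition. The cutoff $B_k$ is tied to the layer $F_k=\{A_{k-1}<|f|\le A_k\}$.
For $r\le B_k$, $|K_{\lambda,B_k}'(t)|\lesssim\min\{|\lambda|,2^{2B_k}t^{-2}\}$, so the H\"ormander integral is $\int\min\{|x-c_I|^{-1},2^{2B_k}\ell_I|x-c_I|^{-2}\}\,\d x\lesssim B_k$. This is where the logarithm is paid.
For $r>B_k$ no cancellation is used, only Chebyshev in $L^2$, the Stein--Wainger decay $2^{-\beta r}$ for $\sup_\lambda|\C_{2,r}^\lambda|$ (no new $TT^*$ argument), and $\|b_k\|_2^2\lesssim A_k\int_{F_k}|f|$.
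Choosing $B_k\simeq\log_1A_k\simeq2^k$ gives $\sum_k2^{-2\beta B_k}A_k/B_k\lesssim1$ and $\sum_kB_k\int_{F_k}|f|\lesssim\int|f|\log_1|f|$.

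\textbf{Your Yano fallback does not reach $L\log L$.} Interpolating each piece between $\sup_\lambda|\C_{2,r}^\lambda f|\lesssim\mathrm{M}f$ and the $L^2$ decay gives $\lesssim(p-1)^{-1}2^{-c\beta r(p-1)}$ on $L^p$. Summing over $r$ costs a second factor $(p-1)^{-1}$, so extrapolation yields at best $L\log^2L$. Moreover, Yano's theorem is local and inhomogeneous, while layer-by-layer extrapolation summed in $L^{1,\infty}$ loses a further iterated logarithm.

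\textbf{The lacunary case.} Rademacher--Menshov and Chang--Wilson--Wolff are $L^2$ devices, and nothing is lost in $L^2$. The loss sits in the low-oscillation $L^1$ estimate, which your plan never touches, and the $\log_4$ does not come from an exponential tail. The missing idea is to use lacunarity to replace the factor $B_k$ by $(\log_1B_k)^2$.
With $\lambda=2^m$ and atoms of length $\simeq2^j$:
\begin{itemize}
\item $T_{k,m}b_{I,k}$ vanishes off $5I$ unless $2^m\le C_02^{2B_k-2j}$.
\item For $2^m\le C_0^{-1}2^{-2B_k-2j}$, cancellation gives $\|T_{k,m}b_{I,k}\|_{L^1((5I)^c)}\lesssim2^{B_k+m/2+j}\|b_{I,k}\|_1$, which is geometric in $m$.
\end{itemize}
After grouping $m$ into blocks of length $B_k$, each block of atom scales interacts with only $O(B_k)$ modulations. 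Corollary~\ref{c:finitemodulationsweak11} controls each block with loss $(\log_1B_k)^2$; it comes from Lemma~\ref{l:weak11sparse} and the $(1,p)$-sparse bounds with constant $p'$. The blocks are disjoint after sparsifying residues. Summing over $k$ uses Kalton's log-convexity of $L^{1,\infty}$, at a cost of $\log_1(k+2)$. With $A_k=2^{2^{2^k}}$ and $B_k\simeq2^{2^k}$, on $F_k$ one has $\log_1B_k\simeq\log_2|f|$ and $\log_1(k+2)\simeq\log_4|f|$, which is exactly the $(\log_2L)^2\log_4L$ loss.
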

	The precise endpoint -- in particular, whether the sharp endpoint space is $L \log_2 L$, in view of Theorem \ref{t:loglogfail} -- are difficult and  attractive open questions. We mention that the space $L \log_2 L$ has already appeared in endpoint questions in Harmonic Analysis and in particular in the study of singular (maximal) Radon transforms \cite{STW04}.
	
    \subsection{Strategy and techniques} We begin this subsection with laying out the strategy for the proof of Theorem \ref{t:loglogfail}.
	
	Roughly speaking, $\C_2^{\lambda}$ acts on a wave packet adapted to some interval $Q$, call it $\varphi_Q$, in the following manner. When the modulation parameter is very large compared to the frequency scale of the wave packet, due to the ample amount of cancellation, we have the heuristic approximation  \[ \C_2^{\lambda} \varphi_Q \simeq 0,\]
    where by $\simeq$ we mean that the approximation holds up to harmless error terms.    On the other hand when $\lambda$ is very small compared to the frequency scale we are in the stationary regime and we obtain the approximation \[ \C_2^{\lambda}\varphi_Q(x) \simeq 	\frac{e(\lambda (x-c_Q)^2)}{x-c_Q}. \] Therefore, given a collection of well separated intervals $\mathcal{J}$ centered at the integers, we have that \[ \C_2^{\lambda}\left ( \frac{1}{\# \mathcal{J}} \sum_{Q \in \mathcal{J}} \varphi_Q  \right  ) \simeq \frac{e(\lambda x^2)}{\# \mathcal{J}}\sum_{\left|\ell_Q\lambda\right| \ll 1} \frac{e( \lambda c_Q^2-2c_Q \lambda x)}{x-c_Q}, \] which produces a logarithmic blow up in  the cardinality of $\mathcal{J}$ provided that the sum is long enough and that the phases \[ \left\{e(\lambda c_Q^2-2c_Q \lambda x)\right\}_{|\ell_Q \lambda | \ll 1}, \qquad \ell_Q := |Q|\] are constant up to a small error. To ensure that this logarithmic divergence happens on a sufficiently large set, we appeal to the theory of \emph{Bohr sets}, i.e.\ sets of the form
    \[ \{ x : |1 - e(\lambda x)| =o(1) \};\] in particular, we will use appropriately translated unions of almost disjoint Bohr sets corresponding to large enough frequencies.
	
	Regarding Theorem \ref{t:LlogL}, the method of proof employs a two-parameter variant of the High-Low method, with the decomposition taking place both at the level of operator, and on the function as well. Standard arguments reduce the question to the study of the oscillatory part of $\C_2$ and its lacunary variant respectively.  When the modulation parameter is sufficiently small relative to scale, i.e.\ 
    \[ |\lambda  t^2| \lesssim 1,\] the kernel 
    \[ \frac{e(\lambda t^2)}{t} \cdot \varphi(|\lambda|^{1/2} |t|), \qquad \varphi \in \mathcal{C}_c^{\infty}(\mathbb{R}) \] is a Calder\'{o}n-Zygmund kernel with a sharply quantified Calder\'{o}n-Zygmund norm in terms of Schwartz semi-norms of $\varphi$. Finally, when the modulation parameters live in $2^{\Z}$ we are able to refine our analysis substantially as, at the cancellative Calder\'{o}n-Zygmund atom scale, the number of modulation parameters that can contribute to the lower oscillatory heights of $\C_2$ is controlled, and therefore Corollary \ref{c:finitemodulationsweak11} can be used  to control the corresponding level sets via the $L^1$ norms of these atoms; this concedes the square of the double-logarithmic factor. 
	\section{Notation}
	We collect the notation used repeatedly below, proceeding from general conventions to the operator-specific definitions.
	\subsection{General conventions}
If $A,B\geq0$, we write $A\lesssim B$, or equivalently $B\gtrsim A$, if there exists an absolute constant $C>0$ such that $A\leq CB$. We write $A\simeq B$ if both $A\lesssim B$ and $B\lesssim A$. A subscript records permitted parameter dependence; for example, the implicit constant in $A\lesssim_k B$ may depend on $k$. Throughout, we denote $e(s)\coloneqq e^{2\pi i s}$.

We take $\N\coloneqq\{1,2,\ldots\}$. For $r\in\R$ and a positive
integer $H$, set $\N_{\geq r}\coloneqq\N\cap[r,\infty)$ and
$[H]\coloneqq\{1,\ldots,H\}$. In addition to that, for any sequence $\mathbf{a}=\left\{a_n\right\}_{n \in \N}$  we will write \[ \sum_{n=a}^b a_n \coloneqq  \sum_{n \in [a,b] \cap \Z }a_n \]

We write $L_0^\infty(\R)$ for the space of bounded, compactly supported measurable functions on $\R$.

We denote the torus $\R/\Z$ by $\TT$ and identify it with
$[-1/2,1/2)$. For $\xi\in\R$, define the distance from integers norm
$\|\xi\|_{\TT}\coloneqq\min_{n\in\Z}|\xi-n|$.
Thus $\|\xi\|_{\TT}=|\xi|$ when $\xi\in[-1/2,1/2)$.

\subsection{Intervals, averages, and maximal functions}
The center and length of a bounded interval $I\subset\R$ of positive length are denoted by $c_I$ and $\ell_I=|I|$, respectively. For $a>0$, the dilation $aI$ is the interval with center $c_I$ and length $a\ell_I$. We write $\cic{1}_E$ for the indicator of a measurable set $E$.

If $I\subset\R$ is a bounded interval of positive length and $0<p<\infty$, define

$$
\langle f\rangle_I
\coloneqq
\frac{1}{|I|}\int_I f(x)\,\d x,
\qquad
\langle f\rangle_{p,I}
\coloneqq
\left(
\frac{1}{|I|}\int_I|f(x)|^p\,\d x
\right)^{1/p}.
$$

For $0<p<\infty$ and $f\in L_{\mathrm{loc}}^p(\R)$, define
\[
\mathrm{M}_p f(x)
\coloneqq
\sup_{I\ni x}
\langle f\rangle_{p,I}.
\]
We write $\mathrm{M}\coloneqq\mathrm{M}_1$ for the uncentered Hardy--Littlewood maximal operator.

\subsection{Quadratic Carleson operators}
For $\lambda\in\R$, define

$$
\C_2^\lambda f(x)
\coloneqq
\operatorname{p.v.}\int_{\R}
f(x-t)\frac{e(\lambda t^2)}{t}\,\d t.
$$

The full quadratic Carleson operator and its lacunary counterpart are

$$
\C_2f(x)
\coloneqq
\sup_{\lambda\in\R}
\bigl|\C_2^\lambda f(x)\bigr|,
\qquad
\C_{2,\mathsf{lac}}f(x)
\coloneqq
\sup_{\lambda\in2^\Z}
\bigl|\C_2^\lambda f(x)\bigr|.
$$

In expressions carrying an subscript $\mathsf{i}$, the value $\mathsf{i}=\; $ denotes the full parameter set $\R$, while $\mathsf{i}=\mathsf{lac}$ denotes the lacunary parameter set $2^\Z$. The empty subscript is suppressed.

\subsection{Dyadic kernel decomposition}
Fix an even function $\phi\in\mathcal C_c^\infty(\R)$ satisfying the condition $\cic{1}_{[-1/4,1/4]}
\leq
\phi
\leq
\cic{1}_{[-1/2,1/2]}.$

For $j\in\Z$, define the odd function $
\psi_j(t)
\coloneqq
\frac{\phi(2^{-j}t)-\phi(2^{-j+1}t)}{t},
\; t\neq0.$
Then

$$
\sum_{j\in\Z}\psi_j(t)=\frac1t
\quad t\neq0, \qquad\supp(\psi_j)
\subseteq
[-2^{j-1},-2^{j-3}]
\cup
[2^{j-3},2^{j-1}].
$$

For $\lambda\in\R\setminus \left \{0\right \}$ and $r\in\Z_{\geq0}$, let $j(\lambda,r)$ be the unique integer such that $2^r
\leq
2^{j(\lambda,r)}|\lambda|^{1/2}
<
2^{r+1},$
we define the $r$-oscillatory scale piece of $\C_{2}^{\lambda}$

$$
\C_{2,r}^\lambda f(x)
\coloneqq
\int_{\R}
f(x-t)\psi_{j(\lambda,r)}(t)e(\lambda t^2)\,\d t.
$$

The oscillatory parts of the full and lacunary operators are, respectively,

$$
\C_2^{\mathsf{osc}}f(x)
\coloneqq
\sup_{\lambda\in\R\setminus\{0\}}
\left|
\sum_{r\in\Z_{\geq0}}
\C_{2,r}^\lambda f(x)
\right|, \;\; \C_{2,\mathsf{lac}}^{\mathsf{osc}}f(x)
\coloneqq
\sup_{\lambda\in2^\Z}
\left|
\sum_{r\in\Z_{\geq0}}
\C_{2,r}^\lambda f(x)
\right|.
$$

Whenever a supremum involving the pieces $\C_{2,r}^\lambda$ is written without an explicit parameter set, it is understood to range over $\R$ in the full case and over $2^\Z$ in the lacunary case. We suppress the empty subscript when referring to the full operator.

\subsection{Young functions and Orlicz spaces}

In this paper, a \emph{Young function} is a continuous, convex, and
strictly increasing function
\[
\Phi\colon[0,\infty)\longrightarrow[0,\infty)
\]
such that $\Phi(0)=0$ and either
\[
\Phi(t)=t
\qquad\text{for every }t\geq0,
\]
or
\[
\lim_{t\to\infty}\frac{\Phi(t)}{t}=\infty.
\]
The first alternative is included to cover the endpoint space $L^1(\R)$.
Throughout the paper, $\Phi$ denotes a Young function in this sense.

Every such function $\Phi$ is a bijection from $[0,\infty)$ onto itself;
we denote its inverse by $\Phi^{-1}$. Asymptotic comparisons between Young
functions are understood as $t\to\infty$. In particular,
\[
\Phi(t)=o\bigl(\Psi(t)\bigr)
\quad\Longleftrightarrow\quad
\lim_{t\to\infty}\frac{\Phi(t)}{\Psi(t)}=0.
\]

\subsection{Modular estimates}

Let $T$ be an operator mapping $L_0^\infty(\R)$ to measurable functions on $\R.$ We say that $T$ satisfies a
\emph{$\Phi$-modular estimate} if there exists a constant $C>0$ such
that
\[
\left|
\left\{
x\in\R:|Tf(x)|>\alpha
\right\}
\right|
\leq
C\int_{\R}
\Phi\left(\frac{|f(x)|}{\alpha}\right)\,\d x
\]
for every $f\in L_0^\infty(\R)$ and every $\alpha>0$. 

When $\Phi(t)=t$, the preceding inequality is precisely the weak-type $(1,1)$ estimate. 
Throughout, the term \emph{modular estimate} refers
to this global, unweighted distributional inequality.
\subsection{Sparse bounds}
Fix $\eta\in(0,1)$. A collection $\mathcal S$ of intervals in $\R$ is called \emph{$\eta$-sparse} if, for every $Q\in\mathcal S$, there exists a measurable set $E_Q\subseteq Q$ such that $\left\{E_Q\right\}_{Q \in \mathcal{S}}$ is a pairwise disjoint collection and $|E_Q|\geq\eta|Q|.$

Let $p_1,p_2\in(0,\infty)$. For an $\eta$-sparse collection
$\mathcal S$, define

$$
\mathsf A_{\mathcal S,(p_1,p_2)}(f_1,f_2)
\coloneqq
\sum_{Q\in\mathcal S}
|Q|
\langle f_1\rangle_{p_1,Q}
\langle f_2\rangle_{p_2,Q}.
$$

A scalar-valued form $\Lambda$ on
$L_0^\infty(\R)\times L_0^\infty(\R)$ is called
\emph{$(p_1,p_2)$-sparse bounded} if there exists a constant $C>0$
such that, for every $f_1,f_2\in L_0^\infty(\R)$, there is an
$\eta$-sparse collection $\mathcal S=\mathcal S(f_1,f_2)$ for which

\begin{equation} \label{eq:sparseconstant}
|\Lambda(f_1,f_2)|
\leq
C\,
\mathsf A_{\mathcal S,(p_1,p_2)}(f_1,f_2).
\end{equation}

The corresponding \emph{$(p_1,p_2)$-sparse norm}, denoted by $\left \|\Lambda\right \|_{(p_1,p_2)}$, is the infimum of all admissible constants, $C$, that satisfy the inequality \eqref{eq:sparseconstant}.

If $
T\colon L_0^\infty(\R)\longrightarrow L_{\mathrm{loc}}^1(\R)
$ is a sublinear operator, we write

$$
\|T\|_{(p_1,p_2)}
\coloneqq
\|\Lambda_T\|_{(p_1,p_2)},
\qquad
\Lambda_T(f_1,f_2)
\coloneqq
\langle T(f_1),f_2\rangle
=
\int_{\R}
T(f_1)(x)\overline{f_2(x)}\,\d x.
$$

\section{Theorem \ref{t:loglogfail}: Modular estimates fail}

We start by fixing an even positive $ \varphi \in \mathcal{C}^{\infty}(\R) $ with $\supp(\varphi) \subseteq \left [-\frac{1}{4},\frac{1}{4}\right ]$  and $\int_{\R} \varphi=1.$ Following standard notation, see for example \cite{DPWW23}, we rescale $\varphi$ to be adapted to the interval $\left[s-\frac{t}{2},s+\frac{t}{2}\right]$ where $(s,t) \in \R \times \R_{+}$ via the formula  \[ \varphi_{\left[s-\frac{t}{2},s+\frac{t}{2}\right]}(x)  \coloneqq  \varphi_{(s,t)}(x)= t^{-1}\varphi\left (\frac{x-s}{t}\right ). \]

The next lemma allows us to quantitatively understand the action of $\C_2^{\lambda}$ on the wave packet $\varphi_Q.$

	\begin{lemma}  Let $Q$ be an interval and $x \not \in Q.$ The following estimates hold:
	\[ \C_2^{\lambda}\varphi_Q(x) = \begin{cases}
		\frac{e(\lambda (x-c_Q)^2)}{x-c_Q}+O\left( \max \left\{ |\lambda| \ell_Q, \frac{\ell_Q}{|x-c_Q|^2} \right\}\right) \\
	O_k\left(\frac{1}{|\lambda|^k \ell_Q^k |x-c_Q|^{k+1}}\right)
		\\
	
	\end{cases} \] 

	\end{lemma}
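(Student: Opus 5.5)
Since $x\notin Q$ and $\supp\varphi_Q\subseteq[c_Q-\ell_Q/4,c_Q+\ell_Q/4]$, the principal value is an honest absolutely convergent integral. With $u=x-c_Q$ and $y=x-t$ we can write
\[
\C_2^{\lambda}\varphi_Q(x)=\int_{\R}\varphi_Q(y)\,\frac{e(\lambda (x-y)^2)}{x-y}\,\d y ,
\]
where $|x-y|\geq |u|-\ell_Q/4\geq |u|/2$ on the support, because $|u|\geq \ell_Q/2$. The two lines of the statement are two separate estimates for this integral. We prove the first by Taylor expansion around $y=c_Q$ and the second by repeated integration by parts in $y$.

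\textbf{First estimate (stationary regime).} Put $K(y)=e(\lambda(x-y)^2)/(x-y)$. Since $\int\varphi_Q=1$,
\[
\C_2^\lambda\varphi_Q(x)-K(c_Q)=\int\varphi_Q(y)\,\bigl(K(y)-K(c_Q)\bigr)\,\d y .
\]
The right-hand side is bounded by $\sup_{|y-c_Q|\leq\ell_Q/4}|K(y)-K(c_Q)|$, because $\|\varphi_Q\|_{L^1}=1$. Write
\[
K(y)-K(c_Q)=\Bigl(\tfrac1{x-y}-\tfrac1{u}\Bigr)e(\lambda(x-y)^2)+\tfrac1u\bigl(e(\lambda(x-y)^2)-e(\lambda u^2)\bigr).
\]
The first term is $O(\ell_Q/|u|^2)$. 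For the second, $|e(a)-e(b)|\leq 2\pi|a-b|$ and $|(x-y)^2-u^2|=|y-c_Q|\,|2x-y-c_Q|\lesssim \ell_Q|u|$, so it is $O(|\lambda|\ell_Q)$. The sum is $O(\max\{|\lambda|\ell_Q,\ell_Q/|u|^2\})$, as claimed. This is valid for every $\lambda$; it is only useful when $|\lambda|\ell_Q|u|\ll1$.

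\textbf{Second estimate (oscillatory regime).} Write $e(\lambda(x-y)^2)=D\,e(\lambda(x-y)^2)$, where $D g=g'/(-4\pi i\lambda(x-y))$. Integrate by parts $k$ times, with no boundary terms since $\varphi_Q$ is compactly supported. This produces $\int e(\lambda(x-y)^2)\,(D^{T})^k\bigl[\varphi_Q(y)/(x-y)\bigr]\d y$, where $D^Tg=\frac{d}{dy}\bigl(g/(4\pi i\lambda(x-y))\bigr)$. Each application of $D^T$ gains a factor $|\lambda|^{-1}|x-y|^{-1}$ together with one derivative. That derivative falls either on $\varphi_Q$, costing $\ell_Q^{-1}$, or on a power of $(x-y)^{-1}$, costing $|x-y|^{-1}\lesssim|u|^{-1}\lesssim\ell_Q^{-1}$. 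By induction, $(D^T)^k[\varphi_Q/(x-y)]$ is a finite sum of terms each bounded by
\[
C_k\,|\lambda|^{-k}|u|^{-k-1}\ell_Q^{-k}\,\|\varphi^{(j)}\|_\infty\,\ell_Q^{-1}\cdot\mathbf 1_{|y-c_Q|\leq\ell_Q/4}, \qquad 0\leq j\leq k .
\]
Integrating over a set of measure $\lesssim\ell_Q$ gives $O_k\bigl(|\lambda|^{-k}\ell_Q^{-k}|u|^{-k-1}\bigr)$.

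The only step needing care is this bookkeeping: one must check that every derivative costs at most $\ell_Q^{-1}$, which holds precisely because $|x-y|\gtrsim|u|\gtrsim\ell_Q$ off $Q$. It also needs $\lambda(x-y)$ to be nonvanishing on the support, which it is since $\lambda\neq0$ in this regime.
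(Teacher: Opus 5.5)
Your proof is correct and follows the paper's argument. The first estimate is the same mean-value argument. Your integration by parts with $Dg=g'/(-4\pi i\lambda(x-y))$ is the paper's $k$-fold integration by parts after the substitution $s=t^2$ that linearizes the phase, rewritten in the $y$ variable, since $D=\frac{1}{2\pi i\lambda}\frac{d}{ds}$ under $s=(x-y)^2$; your check that each derivative costs at most $\ell_Q^{-1}|x-c_Q|^{-1}$ after the $|\lambda|^{-1}$ gain is exactly what the paper compresses into its $L^1$ bound on $\partial^k\bigl(\varphi_Q(x-\sqrt{|t|})/t\bigr)$.
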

	
	\begin{proof}

For the first estimate we leverage the smoothness of $\frac{e(\lambda t^2)}{t}.$ In particular, we have that \[ \begin{split}
\left|	\int_{\R} \varphi_Q(x-t) \left( \frac{e(\lambda t^2)}{t}-\frac{e(\lambda(x-c_Q)^2)}{x-c_Q} \right) \d t\right| & \lesssim \int_{\R} | \varphi_Q(x-t)| |x-t-c_Q|  \left (|\lambda|+ \frac{1}{|x-c_Q|^2}\right ) \d t \\ & \lesssim  \max \left\{ |\lambda| \ell_Q, \frac{\ell_Q}{|x-c_Q|^2} \right\},
\end{split} \] where the passage to the first inequality is possible due to the mean value theorem.

	For the second estimate, by symmetry, we may assume that $x$ lies to the right of the interval $Q$. We then have
\[\begin{split}
	\left|\int_{\R} \varphi_{(c_Q,\ell_Q)}(x-t) \frac{e(\lambda t^2)}{t} \d t \right| &  \lesssim \left|\int_{\R} \varphi_{(c_Q,\ell_Q)}(x-\sqrt{|t|}) \frac{e(\lambda t)}{t} \d t \right| \lesssim \frac{\left \| \partial^k \left( \varphi_{(c_Q,\ell_Q)}(x-\sqrt{|t|})  \frac{1}{t} \right)\right \|_{L^1(\R)}}{|\lambda|^k}  \\ & \lesssim \frac{1}{\ell_Q^k |x-c_Q|^{k+1} |\lambda|^k}
\end{split}  \] 
\end{proof}

Next, we define the notion of a Bohr set; we localize them for technical reasons.

\begin{definition} We define the Bohr set $\mathbf{B}(k,\rho)$ with frequency $k$ and radius $\rho$ as follows:
	\[
	\mathbf{B}(k,\rho)
	\coloneqq 
	\left\{
	x\in \left[ \frac{1}{4},\frac{1}{2}\right )  :
	\left \|k x\right \|_{\TT } \leq \rho	\right\}.
	\]
\end{definition} 
It is a straightforward implication of the definition to observe that for,  say, $k \geq 100$ and $\rho \in (0,\frac{1}{100})$ we have that $\left| \mathbf{B}(k,\rho)\right| \sim  \rho. $  The following lemma allows us to estimate the intersection of two Bohr sets and is expected in the following sense. Roughly speaking, $ \left \| kx \right \|_{\TT} $  quantifies  how far $x$ is from being an element of $\Z / k.$ Consequently, the intersection $\mathbf{B}(k,\rho) \cap \mathbf{B}(m,\rho)$ is maximized when $\gcd(k,m)=\min \left\{k,m\right\}.$ The following is an immediate corollary of \cite[Proposition 8]{PS2016}.
\begin{lemma} \label{l:bohrintersection}
For $ \rho \in (0,\frac{1}{10})$ and $k,m \in \N$, then	\[ \left| \mathbf{B} (k,\rho)  \cap \mathbf{B}(m,\rho) \right| \lesssim \rho^2 +\rho \frac{\gcd(k,m)}{\max\left\{k,m\right\}} \]
\end{lemma}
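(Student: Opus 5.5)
The plan is a direct lattice-point count in the torus. Write $g=\gcd(k,m)$ and assume without loss of generality $m\ge k$, so that the claim reads $|\mathbf{B}(k,\rho)\cap\mathbf{B}(m,\rho)|\lesssim \rho^2+\rho g/m$. First I would decompose $\mathbf{B}(k,\rho)$ into its connected pieces. The condition $\|kx\|_{\TT}\le\rho$ means $x\in\bigcup_{a\in\Z}\left[\frac{a-\rho}{k},\frac{a+\rho}{k}\right]$. Intersected with $[1/4,1/2)$, this leaves $O(k+1)$ intervals, each of length at most $2\rho/k$. The same holds for $m$. The intersection is then a union of pieces
\[
I_a\cap J_b,\qquad I_a=\left[\tfrac{a-\rho}{k},\tfrac{a+\rho}{k}\right],\quad J_b=\left[\tfrac{b-\rho}{m},\tfrac{b+\rho}{m}\right].
\]
Each piece has length at most $2\rho/m$, since $m\ge k$. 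A piece is nonempty only if $|a/k-b/m|\le \rho/k+\rho/m\le 2\rho/k$.

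The key step is counting the pairs $(a,b)$ with $a\lesssim k$, $b\lesssim m$ and $|am-bk|\le 2\rho m$. Write $k=gk'$, $m=gm'$, so that $am-bk=g(am'-bk')$. The quantity $n:=am'-bk'$ is an integer satisfying $|n|\le 2\rho m/g=2\rho m'$. For a fixed value of $n$, the solutions $(a,b)$ of $am'-bk'=n$ form a single residue class of $a$ modulo $k'$, because $\gcd(k',m')=1$. Since $a$ ranges over $O(k)$ values, there are $O(k/k'+1)=O(g)$ solutions for each $n$. The number of admissible $n$ is $O(\rho m'+1)$, so the number of pairs is $O\big(g(\rho m/g+1)\big)=O(\rho m+g)$.

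Multiplying by the piece length $2\rho/m$ gives
\[
|\mathbf{B}(k,\rho)\cap\mathbf{B}(m,\rho)|\lesssim (\rho m+g)\frac{\rho}{m}=\rho^2+\rho\frac{g}{m},
\]
which is the claim.

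The step needing care is the count of $a$-values for fixed $n$. The range of $a$ is $a\in[k/4-\rho,\,k/2+\rho]$, an interval of length about $k/4$, and it meets a fixed residue class mod $k'$ in at most $k/(4k')+1\lesssim g$ points. The "$+1$" terms are harmless: the $n$-count bound $\rho m'+1$ is what produces the $g/m$ term, and that term dominates whenever $\rho m'<1$. Small $k$ (for instance $k<4$) needs no separate treatment, since every count above is uniform. Alternatively, one could simply cite \cite[Proposition 8]{PS2016}, as the paper indicates; the argument above is the self-contained route I would follow.
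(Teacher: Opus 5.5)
Your proof is correct. The paper gives no argument of its own: Lemma \ref{l:bohrintersection} is simply declared an immediate corollary of \cite[Proposition 8]{PS2016}. What you supply is the standard direct overlap count, of the kind used for intersections of sets $\{x:\|qx\|\le\psi\}$ in metric Diophantine approximation, and every step checks out:
\begin{itemize}
\item the pieces $I_a\cap J_b$ have length at most $2\rho/m$;
\item a piece is nonempty only if $|am-bk|\le 2\rho m$;
\item after dividing by $g$, the coprimality of $k'$ and $m'$ confines $a$ to a single residue class modulo $k'$ for each of the $O(\rho m'+1)$ admissible values of $n$.
\end{itemize}
You already flag the one point of bookkeeping. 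The admissible $a$ lie in $[k/4-\rho,\,k/2+\rho)$, an interval of length $k/4+2\rho$, so a fixed residue class modulo $k'$ meets it in at most $g/4+2\rho/k'+1\le g/4+2$ points. This gives the explicit bound
\[
|\mathbf{B}(k,\rho)\cap\mathbf{B}(m,\rho)|\le \frac{2\rho}{m}\,(4\rho m'+1)\Bigl(\frac{g}{4}+2\Bigr).
\]
As for the trade-off, the citation is shorter. However, it leaves the reader to match the cited proposition, whatever its normalization, to the localized sets $\mathbf{B}(k,\rho)\subset[1/4,1/2)$. Your route is self-contained, handles the localization directly through the range of $a$, and yields absolute constants. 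That is all the subsequent Cauchy--Schwarz lower bound for $|\mathbf{A}(M,B,c)|$ needs.
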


\begin{proposition}For large $M$ and $B\geq1$, define $\mathbf{A}(M,B,c)$ to be
	the union of Bohr sets over  $B$ many dyadic consequetive frequencies that are greater than $ M $ by the formula 
	\[ \mathbf{A}(M,B,c) \coloneqq \bigcup_{ k \in 2^{\N}, \; \log(k) \in [M,M+B)} \mathbf{B}\left (k,c\right ). \] Then, if $c \in (0,10^{-10})$ we have the following lower bound on $\mathbf{A}(M,B,c)$   \[\left| \mathbf{A}(M,B,c)\right|   \gtrsim \frac{(Bc)^2}{ (Bc)^2+Bc  }.\]   In particular, when  $c \gtrsim B^{-1}$ we have a uniform lower bound on $\mathbf{A}(M,B,c)$, namely $\left|\mathbf{A}(M,B,c) \right|\gtrsim 1.$

\end{proposition}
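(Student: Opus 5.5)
We prove the lower bound with a second moment argument (Cauchy--Schwarz, i.e.\ the Chung--Erd\H{o}s inequality) applied to the counting function
\[
N(x) \coloneqq \sum_{k \in \mathcal K} \cic{1}_{\mathbf{B}(k,c)}(x), \qquad \mathcal K \coloneqq \left\{ k \in 2^{\N} : \log(k) \in [M,M+B) \right\}.
\]
Since $\mathbf{A}(M,B,c) = \{N>0\}$, Cauchy--Schwarz gives
\[
\Bigl(\int N\Bigr)^2 \leq |\mathbf{A}(M,B,c)| \int N^2,
\]
so it suffices to bound $\int N$ from below and $\int N^2$ from above. Note that $\#\mathcal K \simeq B$, with the implicit constant coming from the base of the logarithm. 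For the first moment, we use the observation preceding Lemma \ref{l:bohrintersection}: since $M$ is large, every $k\in\mathcal K$ exceeds $100$, and $c<1/100$, so $|\mathbf{B}(k,c)| \simeq c$. Summing over $k$ gives $\int N \simeq Bc$.

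For the second moment, we expand
\[
\int N^2 = \sum_{k} |\mathbf{B}(k,c)| + \sum_{k\neq m} |\mathbf{B}(k,c)\cap \mathbf{B}(m,c)|.
\]
The diagonal contributes $\lesssim Bc$. For the off-diagonal terms we invoke Lemma \ref{l:bohrintersection}. Since $k=2^a$ and $m=2^b$ are both powers of two, $\gcd(k,m)/\max\{k,m\} = 2^{-|a-b|}$. Hence
\[
\sum_{k\neq m} |\mathbf{B}(k,c)\cap\mathbf{B}(m,c)| \lesssim \sum_{k \neq m} \left( c^2 + c\, 2^{-|a-b|}\right) \lesssim (Bc)^2 + Bc,
\]
because for each fixed $a$ the geometric series $\sum_b 2^{-|a-b|}$ is $O(1)$. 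This step is the crux of the argument. The lacunary (dyadic) structure of the frequencies is exactly what makes the gcd term summable. For general integer frequencies, the gcd contributions could accumulate and destroy the near-independence of the Bohr sets.

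Combining the two moments,
\[
|\mathbf{A}(M,B,c)| \geq \frac{(\int N)^2}{\int N^2} \gtrsim \frac{(Bc)^2}{(Bc)^2 + Bc},
\]
which is the claimed bound. When $c \gtrsim B^{-1}$ we have $Bc \gtrsim 1$, so $(Bc)^2 \gtrsim Bc$ and the ratio is $\gtrsim 1$. The only bookkeeping points are two. First, $M$ must be large enough that every frequency satisfies the hypothesis $k\ge 100$ needed for $|\mathbf{B}(k,c)|\simeq c$. Second, the condition $c<10^{-10}$ places us inside the range $\rho<1/10$ in which Lemma \ref{l:bohrintersection} applies.
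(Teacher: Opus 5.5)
Your proof is correct and follows the paper's argument: Cauchy--Schwarz (the second-moment bound) applied to $\sum_{k}\cic{1}_{\mathbf{B}(k,c)}$, with Lemma \ref{l:bohrintersection} bounding the pair sum $\sum_{k,m}|\mathbf{B}(k,c)\cap\mathbf{B}(m,c)|$ by $(Bc)^2+Bc$. You also spell out two steps the paper leaves implicit, namely the first-moment bound $|\mathbf{B}(k,c)|\simeq c$ and the geometric-series summation coming from $\gcd(2^a,2^b)/\max\{2^a,2^b\}=2^{-|a-b|}$.
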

	\begin{proof} 

		By Lemma \ref{l:bohrintersection} we learn that  \[ \sum_{k,m \in 2^{\N}, \; \log(k),\log(m) \in [M,M+B)}|\mathbf{B}(k,c) \cap \mathbf{B}(m,c)| \lesssim   (Bc)^2+Bc,  \] so by Cauchy-Schwartz
        \[
	|\mathbf{A}(M,B,c)| \geq  \frac{\left( \displaystyle{\sum_{\substack{k \in 2^{\N} \\  \log(k) \in [M,M+B)}}} |\mathbf{B}(k,c)| \right)^2}{ \displaystyle{\sum_{\substack{k,m \in 2^{\N} \\  \log(k),\log(m) \in [M,M+B)}} |\mathbf{B}(k,c) \cap \mathbf{B}(m,c)| }} \gtrsim  \frac{(Bc)^2}{ (Bc)^2+Bc  }.	\] 
\end{proof}

We turn to the proof.
	\begin{proof}[Proof of Theorem \ref{t:loglogfail}]   Fix $N \in \N$ large, and define $\mathcal{J}_N$ the collection of intervals and the corresponding superposition of wave packets adapted to these intervals  \[\mathcal{J}_N \coloneqq  \left\{ j+ 2^{-Nj} \left[-\frac{1}{2},\frac{1}{2}\right]: j \in [N] \right\},\quad \chi_N=\frac{1}{N}\sum_{Q \in \mathcal{J}_N} \varphi_Q. \] 
It is rather  immediate that 
 \begin{equation}\label{eq:modularcalc} \int_{\R} \Phi\left( \frac{N | \chi_N(x)|}{\log N} \right) \d x \lesssim \sum_{j=1}^N 2^{-Nj} \Phi\left(\frac{C2^{Nj}}{\log N}\right) \lesssim \sum_{j=1}^N 2^{-Nj}o\left( \frac{2^{Nj}}{\log N} \log(Nj)  \right)=o(N ).
\end{equation} 
 Following our heuristics as outlined in the introduction we study the level set \[\left\{ x: \C_{2,\mathsf{lac}} \chi_N(x) \geq \mathbf{c} \frac{\log N}{N}\right\},\]
 where $\mathbf{c} > 0$ is a suitably small positive constant, which we will be free to decrease finitely many times throughout the argument as needed. The component of $\chi_N$ that corresponds to very large scales compared to the modulation parameter we select will be handled via the oscillatory estimate, while the rest of the scales are handled by the stationary estimate. With $0 < A < \infty$ a parameter to be optimized below, whenever 
\begin{align}\label{e:xrest} x \not \in \bigcup_{Q \in \mathcal{J}_N}3Q, \qquad \| x \|_{\mathbb{T}} \gtrsim 1 \end{align}
we decompose
\[ \C_2^{\lambda} \chi_N(x)= \frac{1}{N}\left(\sum_{\ell_Q |\lambda| \leq A}\C_2^{\lambda} \varphi_Q(x)+ \sum_{\ell_Q |\lambda| > A}\C_2^{\lambda} \varphi_Q(x)\right); \]
we may bound  \begin{equation} \label{eq:largemod}
	\left| \sum_{\ell_Q |\lambda| > A}\C_2^{\lambda} \varphi_Q(x)\right| \lesssim \sum_{ 2^{Nj}<\frac{|\lambda|}{A} } \frac{2^{2Nj}}{|\lambda|^2 |x-j|^3 } \lesssim \frac{1}{A^2 \left \| x \right \|_{\TT}^3} \lesssim \frac{1}{A^2},
\end{equation}
while for the smaller scales we have that \begin{equation} \label{eq:stationary} \begin{split}
		\sum_{\ell_Q |\lambda| \leq A} \C_2^{\lambda}\varphi_Q(x)&= \sum_{\ell_Q |\lambda| \leq A} \frac{e(\lambda (x-c_Q)^2)}{x-c_Q}+O \left(\max \left\{A,\frac{A}{|\lambda| \left \| x \right \|_{\TT}^2}\right\}\right)\\ &= e(\lambda x^2) \sum_{ j =\max \left\{1,\ceiling{\frac{1}{N} \log (|\lambda|/A)} \right\}}^{N}  \frac{e(j^2 \lambda-2\lambda jx  )}{x-j}+ O\left(\max\left\{A,\frac{A}{|\lambda|}\right\} \right)
	\end{split}
\end{equation} 
At this point, in order to absorb the error terms into the height of the level set  we make the choice  $A =c_0   \log N, $ for some $c_0 = c_0(\mathbf{c}) >0$ very small,  and  to realise an almost length $\sim N$  harmonic sum for the main term we restrict to the interval  $\left[  \frac{1}{A_1} 2^{\frac{1}{2}N^2}, A_1 2^{\frac{3}{4}N^2}\right] \cap 2^{\N} $, where $A_1 = A_1(\mathbf{c})$, so that for $x$ satisfying \eqref{e:xrest}, we have the pointwise estimate \[\begin{split}
&	 \sup_{ \lambda \in \left[  \frac{1}{A_1} 2^{\frac{1}{2}N^2},A_1 2^{\frac{3}{4}N^2}\right] \cap 2^{\N}} \left|\C_2^{\lambda}\chi_N- \frac{e(\lambda x^2)}{N}\sum_{ j =j_{N,\lambda} }^{N}  \frac{e(j^2 \lambda-2\lambda jx  )}{x-j}  \right| \\
& \qquad \qquad \lesssim \max\left\{\frac{\log N}{N}, \frac{1}{N A^2},\frac{A}{ 2^{\frac{1}{2}N^2}}\right\}, \qquad j_{N,\lambda} \coloneqq \frac{1}{N} \log \left(\frac{|\lambda|}{A}\right);
\end{split} \] this leads to the containment
\begin{align}
&\left\{x:  \sup_{ \lambda \in \left[  \frac{1}{A_1} 2^{\frac{1}{2}N^2},A_1 2^{\frac{3}{4}N^2}\right] \cap 2^{\N}}\left|\frac{1}{N}\sum_{ j = j_{N,\lambda} }^{N}  \frac{e(2\lambda jx  )}{x-j}\right| \gtrsim \mathbf{c} \frac{\log N}{N}, \; \left \| x \right \|_{\TT} \gtrsim 1 \right\} \\
& \subseteq \left\{x: \C_{2,\mathsf{lac}}\chi_N(x) \geq \mathbf{c} \frac{\log N}{N}\right\}\cup \left( \bigcup_{Q \in \mathcal{J}_N}3Q\right) . \end{align} It is straightforward to see that $\floor{j_{N,\lambda}}=k$ is equivalent to the membership $\lambda \in \left[2^{Nk}A,2^{N(k+1)} A \right).$ We define $E_{N,k} \coloneqq k+\mathbf{A}\left (Nk+\log(A),N,\frac{1}{2^{100}N} \right )$ and collect 
\[ E_N := \bigcup_{k \in [\frac{3N}{5},\frac{7N}{10}] \cap \N } E_{N,k};\]
note the lower bounds  \[ \inf_{x \in E_N}\sup_{ \lambda \in \left[  \frac{1}{A_1} 2^{\frac{1}{2}N^2},A_1 2^{\frac{3}{4}N^2}\right] \cap 2^{\N}} \left|\frac{1}{N}\sum_{ j = j_{N,\lambda} }^{N}  \frac{e(2\lambda jx  )}{x-j}\right|  \gtrsim \mathbf{c} \frac{\log N}{N}, \qquad \inf_{x \in E_N}  \left \| x \right \|_{\TT} \geq \frac{1}{4}. \]  Indeed if $x \in E_N$ we find $k \in \left[0.6N,0.7N\right]$ such that $x =k+\tau$ with $\tau \in \mathbf{B}(\lambda_0,\frac{1}{2^{100}N})$ for some $\lambda_0 \in 2^{\N}$ with 
\[ \lambda_0 \in \left[2^{Nk} A,2^{N(k+1)}A\right) \subseteq \left[  \frac{1}{A_1} 2^{\frac{1}{2}N^2},A_1 2^{\frac{3}{4}N^2}\right].\]  Therefore, 
\begin{align}
    \left|\frac{1}{N} \sum_{j=j_{N,\lambda_0}}^N \frac{e(2 \lambda_0 j x)}{x-j}\right|&= \left|\frac{1}{N} \sum_{j=k+1}^N \frac{e(2 \lambda_0 j x )}{k+\tau-j}\right|+O\left (\frac{1}{N}\right )\\
    & \qquad = \left|\frac{1}{N} \sum_{j=k+1}^N \frac{1}{k+\tau-j}\right|+O \left(2^{-99}\frac{\log N}{N}\right) \gtrsim A_0 \frac{\log N}{N}. 
\end{align} where the penultimate estimate is due to the fact that $ \left \| 2 \lambda_0 j x \right \|_{\TT} \leq \frac{j}{2^{99}N}  $ and the last inequality follows for example from the integral test. We arrive at the lower bounds
\[ \begin{split}
	&\left| \left\{x:  \sup_{ \lambda \in \left[  \frac{1}{A_1} 2^{\frac{1}{2}N^2},A_1 2^{\frac{3}{4}N^2}\right]  \cap 2^{\N}}\left|\frac{1}{N}\sum_{ j = j_{N,\lambda} }^{N}  \frac{e(2\lambda jx  )}{x-j}\right|  \gtrsim \frac{\log N}{N} \right\} \right|  \\
    & \qquad \qquad \qquad \geq \left|E_N\right|  \geq  \sum_{k \in \left[0.6N,0.7N\right] } \left|\mathbf{A}\left (Nk+\log(A),N,\frac{1}{2^{100}N} \right )\right| \gtrsim N. 
\end{split}\] 
This completes the proof: it is impossible to have  
\[  \left|\left\{\C_{2,\mathsf{lac}} \chi_N > \frac{\log N}{N}  \right\} \right| \leq K  \int_{\R} \Phi\left( \frac{N}{\log N} \chi_N(x) \right) \d x \] uniformly in $N$ for any $0 < K < \infty$, as this would force $N \lesssim o(N).$
	\end{proof}
	\section{Positive results }We begin this section with a technical lemma concerning the weak type $(1,1)$ norm of a finite maximal operator formed by sublinear operators whose sparse $(1,p)$ norm is controlled by $p'$ as $ p \to 1^{+}.$
	\begin{lemma} \label{l:weak11sparse}Let $ \left\{T_j\right\}_{j=1}^N$ be sublinear operators that act on $L_{0}^{\infty}(\R)$  whose sparse $(1,p)$ norm grows at most as $p'$, namely \[ \displaystyle{\sup_{p \in (1,2)} \max_{j=1,\ldots,N}} \frac{\left \| \, 
    \left|T_j\right| \, \right \|_{(1,p)}}{p'} \lesssim 1\]  then we have that  \[\left \| \displaystyle{\max_{j=1,\ldots N} }|T_j| \right \|_{L^1(\R) \to L^{1,\infty}(\R)} \lesssim  \left( \log_1 N \right)^2. \]
	\end{lemma}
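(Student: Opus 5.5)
The plan is to linearize the maximum, test the resulting form against an indicator function, and choose the exponent $p$ separately for each $j$ so that the logarithmic losses add up like an entropy.

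\emph{Reduction.} By homogeneity we may normalize $\|f\|_{L^1}=1$ and fix $\alpha>0$. Let $E:=\{\max_j|T_jf|>\alpha\}$; we must show $\alpha|E|\lesssim(\log_1N)^2$. We remove the exceptional set $H:=\{\mathrm{M}f>C\alpha/(\log_1N)^2\}$. This has $|H|\lesssim(\log_1N)^2/\alpha$ by the weak $(1,1)$ bound for $\mathrm{M}$, so we may assume $|E\setminus H|\geq |E|/2$ and set $E':=E\setminus H$. Linearize the maximum by a measurable partition $E'=\bigsqcup_{j=1}^N G_j$ with $|T_jf|>\alpha$ on $G_j$. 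Testing against $g_j=\overline{\operatorname{sgn}(T_jf)}\,\cic{1}_{G_j}$ gives
\[
\alpha|E'|\leq\sum_{j=1}^N\langle |T_jf|,\cic{1}_{G_j}\rangle\lesssim\sum_{j=1}^N p_j'\,\mathsf A_{\mathcal S_j,(1,p_j)}(f,\cic{1}_{G_j}).
\]
Here the exponent $p_j\in(1,2)$ may be chosen separately for each $j$; this freedom is the heart of the argument.

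\emph{Single sparse form against an indicator.} For a single sparse collection $\mathcal S$ and a set $G\subseteq H^c$, group the intervals $Q\in\mathcal S$ with $Q\cap G\neq\emptyset$ according to $\langle\cic{1}_G\rangle_Q\simeq2^{-k}$, $k\geq0$. Since such $Q$ meet $H^c$, we have $\langle f\rangle_Q\lesssim\alpha/(\log_1N)^2$. Sparsity gives
\[
\sum_{Q:\,\langle\cic{1}_G\rangle_Q\simeq2^{-k}}|Q|\lesssim|\{\mathrm{M}\cic{1}_G\gtrsim2^{-k}\}|\lesssim2^k|G|.
\]
The $k$-th piece is therefore $\lesssim\frac{\alpha}{(\log_1N)^2}2^{k/p'}|G|$. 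This is summable only for $k\lesssim p'$, which is exactly where the choice of $p$ enters. For large $k$ I would instead bound the piece using $\|f\|_{L^1}=1$: when $\langle\cic{1}_G\rangle_Q\simeq 2^{-k}$, the factor $\langle\cic{1}_G\rangle_{p,Q}$ gains $2^{-k/p}$, and $\sum_Q|Q|\langle f\rangle_Q$ over a sparse family of disjoint-ish tops is controlled by $\|f\|_1$ times the number of generations. Balancing the two regimes at $2^k\approx(|E|/|G|)$ should yield an estimate of the shape
\[
\mathsf A_{\mathcal S,(1,p)}(f,\cic{1}_G)\lesssim p'\,\frac{\alpha}{(\log_1N)^2}\,|G|\Bigl(\frac{|E'|}{|G|}\Bigr)^{1/p'}\quad\text{(up to harmless terms)}.
\]
This is the standard computation showing that sparse $(1,p)$ forms imply weak $(1,1)$ bounds with constant a power of $p'$. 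Pinning down this bound with the correct $p'$ dependence, and in particular handling the large-$k$ tail without losing more than one extra factor of $p'$, is the step I expect to be the main obstacle.

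\emph{Choice of exponents and summation.} Take $p_j'\simeq\log_1(|E'|/|G_j|)$, which is admissible (we may cap $p_j'$ from below by $2$). Then $(|E'|/|G_j|)^{1/p_j'}\lesssim1$, and the estimate becomes
\[
\alpha|E'|\lesssim\frac{\alpha}{(\log_1N)^2}\sum_{j=1}^N|G_j|\,\log_1\!\Bigl(\frac{|E'|}{|G_j|}\Bigr)^2 .
\]
The weights $w_j=|G_j|/|E'|$ form a probability vector on $N$ points. By concavity of $t\mapsto t\log_1(1/t)^2$ on the relevant range (a Jensen/entropy bound), $\sum_jw_j\log_1(1/w_j)^2\lesssim(\log_1N)^2$. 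Substituting, the right-hand side is $\lesssim \alpha|E'|$ times an absolute constant that can be made smaller than $1$ by taking $C$ large in the definition of $H$. This is a contradiction unless $|E'|$ is controlled, and one concludes $|E|\lesssim|H|+|E'|\lesssim(\log_1N)^2/\alpha$. The two logarithms thus come from the sparse constant $p'$ and from the local $p'$-loss in the sparse-to-weak-type step, each of size $\log_1 N$ after the entropy averaging.
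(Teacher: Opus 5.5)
Your skeleton (excise $\{\mathrm{M}f>\lambda\}$, linearize, test each sparse form against $\cic{1}_{G_j}$, sum) matches the paper's. Your adaptive choice $p_j'\simeq\log_1(|E'|/|G_j|)$ followed by Jensen is a legitimate replacement for the paper's summation step; the paper instead fixes one $p$ and one $r>p$ with $p'\simeq r'\simeq\log N$, $r-p\simeq 1/\log N$, and uses H\"older in $j$, $\sum_j|E_j'|^{1/r}\le N^{1/r'}|E|^{1/r}$. The gap is the step you flagged yourself: the single-form bound is never proved, and your tail mechanism fails. The number of nested sparse intervals through a point with $\langle\cic{1}_G\rangle_Q\simeq 2^{-k}$ is unbounded (e.g.\ if $G$ has density $\simeq 2^{-k}$ at every scale near a point), so ``$\|f\|_{L^1}$ times the number of generations'' controls nothing. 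Moreover, the bound as you state it is false in general. Take $\mathcal S=\{Q_0\}$, $f=|Q_0|^{-1}\cic{1}_{Q_0}$, $\lambda=|Q_0|^{-1}$ and $G=E'\subset Q_0$; then the form equals $(\lambda|E'|)^{1/p}$, which exceeds your $p'\lambda|E'|$ once $\lambda|E'|$ is small. What is true, for $\|f\|_{L^1}=1$ and $G\subseteq\{\mathrm{M}f\le\lambda\}$, is
\[
\mathsf{A}_{\mathcal S,(1,p)}(f,\cic{1}_G)\lesssim p'(\lambda|G|)^{1/p}=p'\lambda|G|\,(\lambda|G|)^{-1/p'},
\]
and this becomes your estimate only when $\lambda|E'|\gtrsim 1$.

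The missing idea is to use the disjoint sets $E_Q$ together with $\langle f\rangle_{1,Q}\le\min\{\lambda,\mathrm{M}f(x)\}$ for every $x\in Q$. Over the $k$-th group this gives $\sum|Q|\langle f\rangle_{1,Q}\lesssim\int_{\{\mathrm{M}\cic{1}_G\gtrsim 2^{-k}\}}\min\{\lambda,\mathrm{M}f\}\lesssim\min\{2^k\lambda|G|,\,1+\log_+(2^k\lambda|G|)\}$ by the weak $(1,1)$ layer-cake formula, and this logarithmic tail is summable against $2^{-k/p}$. Alternatively, as the paper does, dominate the whole form by $\int\min\{\lambda,\mathrm{M}f\}\,\mathrm{M}_p\cic{1}_G$ and apply H\"older with $\|\min\{\lambda,\mathrm{M}f\}\|_{L^{r'}}\lesssim r^{1/r'}\lambda^{1/r}$ and $\|\mathrm{M}_p\cic{1}_G\|_{L^r}\lesssim (r/(r-p))^{1/p}|G|^{1/r}$. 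Finally, your closing step has the constant backwards. The bound $\langle f\rangle_{1,Q}\le C\alpha/(\log_1 N)^2$ puts a factor $C$ on the right-hand side, so $C$ must be taken small, which costs only $|H|\lesssim C^{-1}(\log_1N)^2/\alpha$. The contradiction must then be run under the hypothesis $\alpha|E'|\ge C^{-1}(\log_1 N)^2$. That hypothesis is exactly what yields $\lambda|E'|\ge 1$, hence $(\lambda|G_j|)^{-1/p_j'}\le(|E'|/|G_j|)^{1/p_j'}$. With these repairs your argument closes.
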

	
	\begin{proof}
	As in the proof of 	\cite[Theorem E]{ACDPO17} we use the dual formulation of the weak $L^1$ norm and show that \[ \sup_{\left \| f \right \|_{L^{1}(\R)}=1} \sup_{\substack{E\subset \R \\ 0<|E|<\infty}} \inf_{\substack{E' \subset E \\ |E'| \geq \frac{1}{2}|E| }} \sup_{|g| \leq \cic{1}_{E'}} \left|\left \langle \left|T_{j( \cdot)}f\right|,g \right \rangle\right| \lesssim  \left( \log_1 N \right)^2  \] where $j: \R \to [N]$ is a function that linearizes the relevant maximal operator.  Following \cite{ACDPO17} we decompose the level-set of the Hardy-Littlewood maximal function as a disjoint union of maximal dyadic intervals,
    \[ H \coloneqq \left\{x: \mathrm{M}f >C |E|^{-1}\right\} \qquad \mathcal{Q} \coloneqq \left\{ \textbf{maximal }  Q \in \mathcal{D}: \left \langle 1_H \right \rangle_{1,Q} \geq 2^{-5}  \right\}, \]
    excise
    \[ E' \coloneqq  E \setminus \widetilde{H}, \qquad \widetilde{H}\coloneqq \bigcup_{Q \in \mathcal{Q}}3Q \]
    and foliate
 \[ E_j' \coloneqq  E' \cap \left\{x:j(x)=j\right\}, \qquad g_j \coloneqq  \cic{1}_{E_j'}(x). \] Using the hypothesis of our lemma, we extract sparse collections $\mathcal{S}_j^{p}$ such that \begin{align} \left| \left \langle \left|T_jf\right|,g_j \right \rangle \right|  &\lesssim p'  \sum_{I \in \mathcal{S}_j^p} |I| \left \langle f \right \rangle_{1,I} \left \langle g_j \right \rangle_{p,I} \\
 & \qquad  \lesssim p'  \left \| \mathrm{M} f \right \|_{L^{r'}(H^c)}\left \| \mathrm{M}_pg_j \right \|_{L^r(\R)} \lesssim p' r^{\frac{1}{r'}} \left(\frac{r}{r-p}\right)^{\frac{1}{p}} |E|^{-\frac{1}{r}} |E_j'|^{\frac{1}{r}},  \end{align}
 so
 \begin{align} \left| \left \langle \max_{j=1,\ldots N} \left|T_jf\right|,g \right \rangle \right| &\leq \sum_{j=1}^N \left| \left \langle \left|T_jf\right|,g_j \right \rangle \right| \\
 & \qquad  \lesssim p'r^{\frac{1}{r'}} \left(\frac{r}{r-p}\right)^{\frac{1}{p}} |E|^{-\frac{1}{r}} \sum_{j=1}^N |E_j'|^{\frac{1}{r}} \lesssim p' r^{\frac{1}{r'}} \left(\frac{r}{r-p}\right)^{\frac{1}{p}}N^{\frac{1}{r'}},  \end{align} where $r>p.$ Therefore \[ \left \| \max_{j=1,\ldots N} \left|T_j\right| \right \|_{L^1(\R) \to L^{1,\infty}(\R)} \lesssim \inf_{p \in (1,2)}\inf_{r>p} p' r^{\frac{1}{r'}} \left(\frac{r}{r-p}\right)^{\frac{1}{p}}  N^{\frac{1}{r'}} \lesssim  \left(\log N\right)^2  \] where the passage to the last inequality can be seen by taking  for example $p= \frac{\log N}{\log N-2}$ and $r=\frac{\log N}{\log N-3}.$
\end{proof} 
	We record the following immediate corollary. 
	\begin{corollary}\label{c:finitemodulationsweak11}
		\[  \sup_{B \geq 0 }\left \| \max_{\theta \in \Theta} \left| \sum_{0 \leq r \leq B}  \C_{2,r}^{\theta}f \right| \right \|_{L^{1,\infty}(\R)} \lesssim \left(\log_1(\#\Theta)\right)^2 \left \| f \right \|_{L^1(\R)}. \]
	\end{corollary}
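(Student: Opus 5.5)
The corollary will follow from Lemma \ref{l:weak11sparse}, applied to the finite family $T_\theta f \coloneqq \sum_{0\le r\le B}\C_{2,r}^{\theta}f$ with $\theta\in\Theta$ and $N=\#\Theta$. The only thing to check is the hypothesis of that lemma: for each fixed $\theta$ and $B$, the operator $|T_\theta|$ must have sparse $(1,p)$ norm $\lesssim p'$ for $p\in(1,2)$, with constants uniform in $\theta$ and $B$.

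I would first observe that $T_\theta$ is convolution with the kernel
\[
K_{\theta,B}(t)=e(\theta t^2)\sum_{0\le r\le B}\psi_{j(\theta,r)}(t).
\]
Its support lies in $\{|\theta|^{1/2}|t|\lesssim 2^{B}\}$ and away from a neighbourhood of $0$ of size $\sim|\theta|^{-1/2}$. The plan is to show that $K_{\theta,B}$ is a Calder\'on--Zygmund kernel in a Dini/H\"ormander sense with constants independent of $\theta$ and $B$, and that $T_\theta$ is bounded on $L^2$ uniformly.

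\textbf{$L^2$ bound.} This is uniform because the multiplier of $K_{\theta,B}$ is a truncated oscillatory Hilbert transform; by scaling $t\mapsto|\theta|^{-1/2}t$ it reduces to $\theta=\pm1$. There, the oscillatory integral estimates (van der Corput) give a bounded multiplier uniformly in the truncation $B$. Summed over $r$, the pieces give $\bigl|\widehat{\psi_j e(\theta\,\cdot^2)}(\xi)\bigr|\lesssim \min\{1,(2^{2j}|\theta|)^{-1/2}\}$ off the stationary point, and only $O(1)$ pieces are stationary for a given $\xi$. So the sum is bounded by a geometric series.

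\textbf{Sparse bound.} Given the $L^2$ bound, I would aim for the sparse $(1,p)$ bound with constant $\lesssim p'$. This is the main obstacle, since the kernel at scale $r$ has derivative of size $2^{r}\cdot$(size), and so is not a standard smooth CZ kernel uniformly in $B$. The route I would try first is the $L^p$-H\"ormander / ``$r$-Hörmander'' framework of Lerner-type sparse domination. For the form $\langle T f_1,f_2\rangle$ with $f_1$ in $L^1$ averages and $f_2$ in $L^p$ averages, one needs the $W_1$ weak-type estimate of the truncated maximal operator together with a kernel regularity in $L^{p'}$ averaged form:
\[
\sum_k 2^{k/p}\sup_{|y|\le \delta}\Bigl(\int_{2^k\delta<|x|\le 2^{k+1}\delta}|K(x-y)-K(x)|^{p'}\,dx\Bigr)^{1/p'}<\infty .
\]
For the oscillatory pieces I would verify this by interpolating between the trivial $L^{\infty}$ size bound and the derivative bound. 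Alternatively, and perhaps more cleanly, I would use the known fact that each single-scale oscillatory piece $\C_{2,r}^\theta$ satisfies an improving $L^p\to L^{p'}$ estimate at its scale with decay $2^{-\epsilon r}$. Summing these gives the sparse bound, with the $p'$ blowup arising only from the $r$-sum against $2^{-\epsilon(p) r}$ with $\epsilon(p)\sim 1/p'$.

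Once the uniform $(1,p)$ sparse bound with constant $\lesssim p'$ is in hand, Lemma \ref{l:weak11sparse} gives weak $(1,1)$ norm $\lesssim(\log_1\#\Theta)^2$, uniformly in $B$.
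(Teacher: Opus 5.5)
Your reduction is the same as the paper's: apply Lemma \ref{l:weak11sparse} to $T_\theta=\sum_{0\le r\le B}\C_{2,r}^{\theta}$, so everything rests on a $(1,p)$ sparse bound with constant $\lesssim p'$, uniform in $\theta$ and $B$. That step is the entire content of the corollary, and there is a genuine gap: neither of your routes delivers it.

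\textbf{The $L^{p'}$-H\"ormander route.} This fails because the H\"ormander constant of $K_{\theta,B}$ really does grow like $B$. Rescale to $\theta=1$ and fix $|y|\sim 1$. On each annulus $2^k<|x|\le 2^{k+1}$ with $1\ll 2^k\lesssim 2^{B}$ you have $K(x-y)-K(x)\approx x^{-1}e(x^2)\bigl(e(y^2-2xy)-1\bigr)$. This has size $\sim|x|^{-1}$ on a fixed proportion of the annulus, because the phase difference $2xy$ is large there. So each of the $\sim B$ annuli contributes $\gtrsim 1$ to your sum. Interpolating the size bound against the derivative bound cannot help: for $|x||y|\gtrsim 1$ the mean-value bound $|y|\,\|K'\|_{\infty}\sim|y|$ is worse than the size bound $|x|^{-1}$. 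This is exactly the loss of $B$ that the paper accepts in its $\mathsf{Low}$ estimate for the full operator.

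\textbf{The single-scale improving route.} This fails for a different reason. Single-scale estimates decay in $r$ only in the $(p,p)$ form. Interpolating $\|\C_{2,r}^{1}\|_{2\to 2}\lesssim 2^{-r}$ with the trivial $(1,1)$ bound gives a constant $2^{-2r/p'}$, which is where your $\sum_r 2^{-\epsilon(p)r}\sim p'$ comes from. But a $(p,p)$ sparse bound cannot feed Lemma \ref{l:weak11sparse}. Its proof controls $\langle f\rangle_{1,I}$ by $\mathrm{M}f\le C|E|^{-1}$ off $H$, and nothing plays that role for $\langle f\rangle_{p,I}$ when $f\in L^1$.

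In the $(1,p)$ form there is no decay at all. At $\theta=1$, let $k_r$ be the kernel of $\C_{2,r}^{1}$, take $f$ an approximate point mass at $0$, and take $g=k_r/|k_r|$ on $\supp k_r$. Then $|\langle \C_{2,r}^{1}f,g\rangle|\approx\int|k_r|\sim 1$, while every sparse form $\mathsf A_{\mathcal S,(1,p)}(f,g)$ is $O(1)$. So each scale costs $O(1)$, and summing single-scale bounds over $0\le r\le B$ gives back the factor $B$.

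\textbf{What is missing.} Uniformity in $B$ requires exploiting cancellation between scales for $L^1$ data. Essentially this is a Chanillo--Christ type $TT^*$ analysis of the bad part of a Calder\'on--Zygmund decomposition, built into a sparse recursion. The paper does not redo this. It dominates $\bigl|\sum_{0\le r\le B}\C_{2,r}^{\theta}f\bigr|\lesssim H_{*,2}^{\theta}f+\mathrm{M}f$ by the maximally truncated oscillatory Hilbert transform, rescales to $\theta=1$, and imports the $(1,p)$ sparse bound with constant $\lesssim p'$ from \cite[Theorem 1.1]{KL18}. Without that input, your argument gives the corollary only with an extra factor of $B$. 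That would be fatal in the paper's application, where $\#\Theta\sim B_k$.
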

	\begin{proof} Initially we observe that  \[ \left|\sum_{0 \leq r \leq B}\C_{2,r}^{\theta}f\right| \lesssim H_{*,2}^{\theta}f+\mathrm{M}f, \quad H_{*,2}^{\theta}f \coloneqq \sup_{\eps>0} \left| \int_{|t|> \eps}f(x-t) \frac{e(\theta t^2)}{t} \d t  \right|  \] and due to dilation invariance $\displaystyle{\sup_{\lambda \in \R}} \left \| H_{*,2}^{\lambda}\right \|_{(1,p)} $ can be evaluated by taking $\lambda = 1$.
    Therefore, because of \cite[Theorem 1.1]{KL18} we have that $\displaystyle{\sup_{\lambda \in \R }}\left \|\sum_{0 \leq r \leq B} \C_{2,r}^{\lambda}\right \|_{(1,p)} \lesssim p'.$ An application of Lemma \ref{l:weak11sparse} finishes the proof.
	\end{proof}
\subsection{ $L \log_1 L $ and  $L \left(\log_2 L\right)^2 \log_4L$ estimates}

We recall Theorem \ref{t:LlogL} for convenience.
\begin{theorem} The following bounds hold, uniformly for all $0 < \alpha < \infty$: 
	\[  \left| \left\{\C_2 f > \alpha\right\} \right| \lesssim \int_{\R} \frac{|f(x)|}{\alpha} \log_{1}\left ( \frac{|f(x)|}{\alpha}\right ) \d x \] and \[ \left| \left\{ \C_{2,\mathsf{lac}}f >\alpha   \right\}\right| \lesssim \int_{\R}  \frac{|f(x)|}{\alpha} \log_2\left(\frac{|f(x)|}{\alpha}\right)^2 \log_4 \left(\frac{|f(x)|}{\alpha}\right) \d x  \]
\end{theorem}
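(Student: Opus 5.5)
By homogeneity we may normalize $\alpha=1$. We first reduce to the oscillatory parts. For each $\lambda\neq 0$, the pieces $\psi_j$ with $2^{j}|\lambda|^{1/2}<1$, i.e.\ the scales where $|\lambda t^2|\lesssim 1$, sum to a kernel
\[
\frac{e(\lambda t^2)}{t}\,\phi_\lambda(t)=\frac{\phi_\lambda(t)}{t}+\frac{e(\lambda t^2)-1}{t}\,\phi_\lambda(t),\qquad \phi_\lambda(t):=\phi\bigl(|\lambda|^{1/2}t\bigr).
\]
The first term is a smooth truncation of the Hilbert kernel, and the second is dominated by $|\lambda|^{1/2}$ times an $L^1$-normalized bump. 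The maximal operator of this part is therefore bounded by $H_*f+\mathrm{M}f$, which is of weak type $(1,1)$. It thus suffices to prove both bounds for $\C_2^{\mathsf{osc}}$ and $\C_{2,\mathsf{lac}}^{\mathsf{osc}}$.

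Next we run the two-parameter high--low decomposition. We take the Calder\'on--Zygmund decomposition of $f$ at height $1$, writing $f=g+\sum_{I}b_I$ with $|g|\lesssim 1$ and $\sum|I|\lesssim\|f\|_1$. The good part is handled by the $L^2$ bound of Stein--Wainger, and we discard $\bigcup 3I$. For a fixed $x\notin\bigcup 3I$ and a fixed atom $b_I$, we split the $r$-sum according to the relation between $2^{j(\lambda,r)}$ and $\ell_I$. When the kernel scale is smaller than $\ell_I$, the term vanishes for $x\notin 3I$. When the scale $2^j\gg \ell_I$, we use the cancellation of $b_I$ together with the derivative bound $|\partial_t(\psi_j(t)e(\lambda t^2))|\lesssim 2^{-2j}(1+|\lambda|2^{2j})\simeq 2^{-2j}2^{2r}$. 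This gives a gain $\ell_I 2^{-j}2^{2r}$, which is summable as long as $2^{2r}\ll 2^{j}/\ell_I$. The remaining (``high'') range consists of large $r$ with $\ell_I|\lambda|^{1/2}2^{r}\gtrsim 1$. There we do not use cancellation, and instead use an $L^2$ (or $TT^*$) estimate with decay $2^{-\epsilon r}$ for the single-scale maximal operator $\sup_\lambda|\C_{2,r}^\lambda|$. This decay is the standard Stein--Wainger oscillatory estimate.

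The key device is also to decompose $f$ by height: $f=\sum_{s\ge0}f_s$ with $f_s=f\cdot\cic{1}_{\{2^{s}\le|f|<2^{s+1}\}}$, handling each layer at level $\alpha_s\approx (s+1)^{-2}$ so that the levels sum to at most $1$. For the full operator, the $L^2$ high-part estimate applied to $f_s$ costs $2^{s}\|f_s\|_1$ unless the relevant $r$ are restricted. Choosing a cutoff $r\gtrsim s$ between the cancellative and the $L^2$ regime makes the loss linear in $s$. Since $\sum_s s\|f_s\|_1\approx\int|f|\log_1|f|$, this yields the $L\log_1 L$ bound.

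For the lacunary operator, we must improve the loss from $s$ to $(\log s)^2\log\log s$. At a cancellative atom scale, only $\lesssim s$ dyadic values $\lambda\in 2^\Z$ place that scale in the intermediate window $0\le r\lesssim s$. For each atom the supremum is therefore a maximum over $\#\Theta\lesssim s$ parameters of truncated sums $\sum_{0\le r\le B}\C_{2,r}^\theta$. Corollary \ref{c:finitemodulationsweak11} then gives a weak-$(1,1)$ bound with constant $(\log_1 s)^2$. Summing over the layers $f_s$ with levels chosen as $\alpha_s\approx(\log s)^{-1}(\log\log s)^{-1-}$, or more sharply with a $\log_4$-type summable sequence, produces the factor $\log_2^2\log_4$.

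We expect the main obstacle to be this localization. The corollary is global, but the parameter set depends on the atom scale, so we must group atoms by dyadic length and $\lambda$ by $|\lambda|^{1/2}\ell_I$, and show that the groups with different windows can be recombined with only a summable (lacunary) loss. The tails $r\ge Cs$ are again controlled by the $2^{-\epsilon r}$ $L^2$ decay.
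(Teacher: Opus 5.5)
Your architecture matches the paper's: reduce to $\C_{2,\mathsf{i}}^{\mathsf{osc}}$, do a Calder\'on--Zygmund decomposition at height $1$, split the atoms by the height of $f$, treat $r>B$ with the Stein--Wainger decay $2^{-\beta r}$ in $L^2$ and $r\le B$ by cancellation at a cost of $B$, and in the lacunary case trade that cost for the $(\log_1 B)^2$ of Corollary~\ref{c:finitemodulationsweak11}. The genuine gap is in the lacunary bound, in two places.

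First, dyadic layers $2^s\le|f|<2^{s+1}$ cannot produce the $\log_4$ factor. Your levels $\alpha_s\approx(\log s)^{-1}(\log\log s)^{-1-}$ are not summable in $s$, and even if they were, $\alpha_s^{-1}(\log_1 s)^2$ is $(\log s)^3(\log\log s)^{1+}$. A summable choice costs far more. Kalton's log-convexity over $s$ costs an extra $\log_1(s+2)\simeq\log_2|f|$, which gives at best $L(\log_2L)^3$.

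The missing idea is to group the heights triple-exponentially: take $F_k=\{A_{k-1}<|f|\le A_k\}$ with $A_k=2^{2^{2^k}}$, and a single cutoff $B_k=C2^{2^k}\simeq\log A_k$ per group. This is still enough for the high part, since $\sum_k2^{-2\beta B_k}A_k<\infty$ for $C$ large. Then the Corollary costs $\log_1(B_k)^2\simeq4^k\lesssim\log_2(|f|)^2$ on $F_k$. Kalton's log-convexity is applied only over $k$, at cost $\log_1(k+2)\simeq\log_4|f|$.

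Second, the localization you flag as the main obstacle is exactly what has to be proved, and the proposal does not supply it. It is needed because the Corollary requires one parameter set for one function. The paper's argument takes $\lambda=2^m$ and $T_{k,m}=\sum_{0\le r\le B_k}\C_{2,r}^{2^m}$, and splits the layer-$k$ atoms by scale:
\begin{itemize}
\item atoms with $\ell_I\gtrsim2^{B_k-m/2}$ contribute nothing off $\bigcup_I5I$;
\item atoms with $\ell_I\lesssim2^{-B_k-m/2}$ satisfy $|T_{k,m}b_{I,k}|\lesssim2^m\ell_I\|b_{I,k}\|_{L^1}$ on a set of measure $\lesssim2^{B_k-m/2}$; this sums geometrically over $m\in\Z$ to $O(\|b_{I,k}\|_{L^1})$ in $L^1$, and this is where $\lambda\in2^{\Z}$ is used;
\item for the remaining window of $O(B_k)$ scales, the paper groups $m$ into blocks $Q_{k,\tau}=[\tau B_k,(\tau+1)B_k)$ and replaces the $m$-dependent window by the block window $R_{k,\tau}$ of length $\le5B_k$; the discrepancy falls into the two previous cases.
\end{itemize}
It then restricts $\tau$ to one residue class so the $R_{k,\tau}$ are disjoint. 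Finally it combines $\|\sup_\tau|g_\tau|\|_{L^{1,\infty}}\le\sum_\tau\|g_\tau\|_{L^{1,\infty}}$, the Corollary with $\#\Theta\lesssim B_k$, and the $L^1$ disjointness of the windows.

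In the full case your argument is essentially the paper's, but two steps need repair.
\begin{itemize}
\item The cancellative range must be a fixed cutoff $r\le B$, not the $\lambda$-dependent condition $\ell_I|\lambda|^{1/2}2^r\ll1$. Your geometric series is summable only for fixed $\lambda$. After $\sup_\lambda$ (take $|\lambda|\ell_I|x-c_I|$ a small constant) it only yields $\|b_I\|_{L^1}/|x-c_I|$ on all of $|x-c_I|\gg\ell_I$, which is not integrable. With a fixed cutoff, the kernel $K_{\lambda,B}=\sum_{r\le B}\psi_{j(\lambda,r)}e(\lambda t^2)$ obeys $|tK_{\lambda,B}(t)|\lesssim1$ and $|K_{\lambda,B}'(t)|\lesssim2^{2B}t^{-2}$ uniformly in $\lambda$. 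Then $\int_{(5I)^c}\min\{|x-c_I|^{-1},2^{2B}\ell_I|x-c_I|^{-2}\}\,\d x\lesssim B$, and this is the true source of the loss.
\item Handling $f_s$ at level $\alpha_s\approx(s+1)^{-2}$ multiplies each weak-type bound by $\alpha_s^{-1}$, so the low part would cost $s^3$ rather than $s$. The paper never splits levels. It sums the low parts directly in $L^1(\R\setminus\bigcup_I5I)$. It sums the high parts by Minkowski in $L^2$ followed by the weighted Cauchy--Schwarz bound $\bigl(\sum_k2^{-\beta B_k}A_k^{1/2}\|b_k\|_{L^1}^{1/2}\bigr)^2\le\bigl(\sum_k2^{-2\beta B_k}A_k/B_k\bigr)\bigl(\sum_kB_k\|b_k\|_{L^1}\bigr)$.
\end{itemize}
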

\begin{proof} 
	It is standard that  \[ \C_{2,\mathsf{i}}f \lesssim  \mathrm{M}f+H^{*}f+ \C_{2,\mathsf{i}}^{\mathsf{osc}}f , \;  \mathsf{i} \in \left\{~,\mathsf{lac}\right\} \]

Since the first two terms are weak type $(1,1)$ operators we may focus on the last term, and by homogeneity we may normalize $\alpha=1.$ We initially perform a Calder\'{o}n-Zygmund decomposition at height $ 1$ to obtain the decomposition $f=g+b$ where $g$ and $b$ enjoy the properties \begin{align} 
\left \| g \right \|_{L^{2}(\R)} &\lesssim  \left \| f \right \|_{L^1(\R)}^{\frac{1}{2}} \\
b&= \sum_{I \in \mathcal{I}
} b_I, \qquad  b_I \coloneqq \cic{1}_I \left( f- \left \langle f \right \rangle_{I} \right), \qquad  \sum_{I \in \mathcal{I}}\ell_I \lesssim \left \| f \right \|_{L^1(\R)}  \end{align} and the $\mathcal{I}$ is a disjoint collection of intervals. 
Using the $L^2(\R)$ boundedness of $\sup_{\lambda \in \R}\left|\mathcal{C}_{2,\mathsf{i}}^{\lambda}\right|$ for $g$ we can focus our attention on $b.$ Due to the fact that $f $ is not quantitatively in $L_{\mathsf{loc}}^2(\R)$ we need to further decompose $f$ and therefore $b_I.$ To wit, if we set
\[ F_k \coloneqq  \begin{cases} \left\{x: A_{k-1}<|f(x)| \leq A_k\right\}, \; \; k \in \N \\  \left\{x: |f(x)| \leq A_0\right\}, \; k=0
	\end{cases} \]
then we decompose
\[ \begin{split}
b_I= \sum_{k \geq 0 } b_{I,k}, \qquad  b_k \coloneqq \sum_{I \in \mathcal{I}} b_{I,k}, \qquad b_{I,k} \coloneqq \cic{1}_I \left( f \cic{1}_{F_k}- \left \langle f \cic{1}_{F_k} \right \rangle_I \right).
\end{split}  \] 
	For the forthcoming High-Low decomposition, the key point is that, as discussed above, the kernel 
    \[ \text{p.v. } \frac{e(\lambda t^2)}{t} \varphi(|\lambda|^{1/2} |t|), \qquad \varphi \in \mathcal{C}_c^{\infty}(\mathbb{R}) \] is a Calder\'{o}n-Zygmund kernel. Therefore, standard Calder\'{o}n-Zygmund theory allows us to get a $L^1(\R)$ estimate on the action of this kernel on a cancellative atom. In addition to this, the atoms with large modulus should be incorporated within the highly oscillatory part of the operator so that their large $L^2(\R)$ norm is offset by the oscillatory decay of $\sup_{\lambda}\left|\C_{2,r}^{\lambda}\right|.$ Motivated by the aforementioned observation, with $\mathsf{A}, \mathsf{B}$ increasing and unbounded sequences of (large) positive numbers to be determined below,
\[     
    \mathsf{A} \coloneqq \left\{A_k\right\}_{k \in \Z_{\geq 0}}, \; \mathsf{B}  \coloneqq \left\{B_k\right\}_{k \in \Z_{\geq 0} },
   \] 
    we control $\C_{2,\mathsf{i}}^{\mathsf{osc}}b$ as follows \[ \begin{split}
		\C_{2,\mathsf{i}}^{\mathsf{osc}}b & \leq \sum_{ k \geq 0 } \sup_{\lambda} \left| \sum_{0 \leq r \leq B_k} \C_{2,r}^{\lambda} \left( \sum_{I \in \mathcal{I}}b_{I,k} \right)+\sum_{r> B_k} \C_{2,r}^{\lambda} \left( \sum_{I \in \mathcal{I}}b_{I,k} \right) \right| \\ &  \leq  \sum_{k \geq 0 } \sup_{\lambda}\left|\sum_{r> B_k} \C_{2,r}^{\lambda} \left( \sum_{I \in \mathcal{I}}b_{I,k} \right)\right|+ \sum_{k \geq 0 } \sup_{\lambda} \left|\sum_{0 \leq r \leq B_k} \C_{2,r}^{\lambda} \left( \sum_{I \in \mathcal{I}}b_{I,k} \right)\right| \\ & \coloneqq \mathsf{High}_{\mathsf{A,B}}^{\mathsf{i}}(b)+\mathsf{Low}^{\mathsf{i}}_{\mathsf{A,B}}(b), \qquad  \mathsf{i} \in \left\{\;, \mathsf{lac}\right\}.
	\end{split}  \]

First, we focus on the full modulation case, where we suppress the $i$;
we begin with the $\mathsf{High}_{\mathsf{A,B}}$ term.   Since, by \cite[Theorem 1, equation (4.1)]{SteinWainger2001}, there exists $\beta>0$ so that 
\[\left  \| \sup_{\lambda} \left| \C_{2,r}^{\lambda}\right|\right \|_{L^2(\R) \to L^2(\R)} \lesssim 2^{-\beta r},\]
we may apply $L^2$-estimates to bound
\[ \begin{split}
	\left| \left\{x: \mathsf{High}_{\mathsf{A,B}}b \gtrsim 1 \right\} \right| & \lesssim \left \| \mathsf{High}_{\mathsf{A,B}}b \right \|_{L^2(\R)}^2  \lesssim \left(\sum_{k \geq 0 } \sum_{r > B_k} 2^{-\beta r} \left \| \sum_{I \in \mathcal{I}} b_{I,k} \right \|_{L^2(\R)}\right)^2   \lesssim \left(\sum_{k \geq 0} 2^{-\beta B_k} A_k^{\frac{1}{2}} \left \| b_{k} \right \|_{L^1(\R)}^{\frac{1}{2}}\right)^2 \\ & \lesssim \left(\sum_{k \geq 0} 2^{-2 \beta B_k} \frac{A_k}{B_k} \right) \left( \sum_{k \geq 0}  B_k \left \| b_k \right \|_{L^1(\R)}\right) \lesssim \left(\sum_{k \geq 0} 2^{-2 \beta B_k} \frac{A_k}{B_k} \right) \left( \sum_{k \geq 0}  B_k \int_{F_k}|f|(x) \d x  \right).
\end{split}    \] For the $\mathsf{Low}_{\mathsf{A,B}}$ term, we observe that the kernel $K_{\lambda,B}$ that corresponds to the lower oscillatory scales $r \in [0,B]$  \[ K_{\lambda,B}(t) \coloneqq  \sum_{0 \leq r \leq B} \psi_{j(\lambda,r)}(t) e(\lambda t^2)  \] satisfies the Calder\'{o}n-Zygmund estimates  
\[ \left| t K_{\lambda,B}(t) \right| \lesssim 1, \qquad | K_{\lambda,B}'(t)| \lesssim \min \left\{ |\lambda|, \frac{2^{2B}}{|t|^2} \right\}, \] 
 so standard arguments yield the estimate \[\begin{split}
\int_{\R \setminus \bigcup_{I \in \mathcal{I}}5I} \sup_{\lambda}\left| \int_{\R}b_{I,k}(x-t) K_{\lambda,B}(t) \d t \right| \d x & \lesssim \left \| b_{I,k} \right \|_{L^1(\R)}  \int_{\R \setminus \bigcup_{I \in \mathcal{I}} 5I }  \min \left\{ \frac{1}{|x-c_I|}, \frac{2^{2B}\ell_I}{|x-c_I|^2} \right\}    \d x \\ &  \lesssim B \left \| b_{I,k} \right \|_{L^1(\R)}.
\end{split} \]  

Therefore, \[ \begin{split}
	 \left| \left\{x:  \mathsf{Low}_{\mathsf{A,B}}(b) \gtrsim 1 \right\} \right| & \lesssim \left \| f\right \|_{L^1(\R)}+  \int_{\R\setminus \bigcup_{I \in \mathcal{I}} 5I } \left| \mathsf{Low}_{\mathsf{A,B}}(b)(x) \right| \d x \lesssim  \sum_{k \geq 0 }  \sum_{I \in \mathcal{I}} B_k \left \| b_{I,k} \right \|_{L^1(\R)} \\ & \lesssim \left \| f \right \|_{L^1(\R)}+ \sum_{k \geq 0} B_k \left \| b_k \right \|_{L^1(\R)} \lesssim \left \| f \right \|_{L^1(\R)} + \sum_{k \geq 0 } B_k \int_{F_k} |f|(x) \d x;
\end{split}  \]
optimizing
\[ 2^k \sim B_k=C  \log_1(A_k), \qquad C = O(1)\]
yields the desired bound.

At this point, we turn our focus to $\mathsf{i}=\mathsf{lac};$ we will re-optimize our constants $A_k,B_k$ below.

We keep the same High part and we will concentrate on the low part, where we can perform a more refined analysis when the modulation parameters live inside $2^{\Z}.$  For the remainder of the proof we write
\[ T_{k,m}h\coloneqq \sum_{0\leq r\leq B_k}\C_{2,r}^{2^m}h, \qquad b_{j,k}\coloneqq\sum_{I\in\mathcal I_j}b_{I,k}, \qquad \mathcal I_j\coloneqq\{I\in\mathcal I:\ell_I\sim2^j\}. \]
Because
\[ \supp \left(K_{2^m,B_k}\right) \subseteq \left\{ 2^{-\frac{m}{2}}  \lesssim   |t| \lesssim 2^{B_k-\frac{m}{2}}  \right\},\]
there exists an absolute constant $C_0\geq1$ such that if $T_{k,m}(b_{I,k})(x)\neq0$ for some $x\notin5I$, then $\ell_I$ cannot be too large relative to the spatial scale of $K_{2^m,B_k}$, namely
\[ 2^m\leq C_0 2^{2B_k}\ell_I^{-2}. \]

We consequently define the very small and middle ranges
\[ \begin{split}
\mathsf L_{k,m}&\coloneqq\left\{j\in\Z:2^m\leq C_0^{-1}2^{-2B_k-2j}\right\},\\
\mathsf S_{k,m}&\coloneqq\left\{j\in\Z:C_0^{-1}2^{-2B_k-2j}<2^m\leq C_0 2^{2B_k-2j}\right\}.
\end{split} \]
Then, on $\left(\bigcup_{I\in\mathcal I}5I\right)^c,$
\[ \sup_{m\in\Z}|T_{k,m}b_k|\leq \sup_{m\in\Z}\left|T_{k,m}\left(\sum_{j\in\mathsf L_{k,m}}b_{j,k}\right)\right|+\sup_{m\in\Z}\left|T_{k,m}\left(\sum_{j\in\mathsf S_{k,m}}b_{j,k}\right)\right|. \]

We first dispose of the very small range.  If $j\in\mathsf L_{k,m}$ and $I\in\mathcal I_j$, the cancellation of $b_{I,k}$ and the bound $|K_{2^m,B_k}'|\lesssim2^m$ give
\[ |T_{k,m}b_{I,k}(x)|\lesssim 2^m\ell_I\|b_{I,k}\|_{L^1(\R)}, \qquad x\notin5I. \]
Moreover, the set of $x$ for which the left hand side is nonzero has measure at most a constant multiple of $2^{B_k-m/2},$ and therefore
\[ \|T_{k,m}b_{I,k}\|_{L^1((5I)^c)}\lesssim 2^{B_k+m/2}\ell_I\|b_{I,k}\|_{L^1(\R)}. \]
Summing the geometric series in $m,$ we obtain
\[ \begin{split}
&\left\|\sup_{m\in\Z}\left|T_{k,m}\left(\sum_{j\in\mathsf L_{k,m}}b_{j,k}\right)\right|\right\|_{L^1(\R\setminus\bigcup_{I\in\mathcal I}5I)}\\
&\qquad\lesssim\sum_{m\in\Z}\sum_{j\in\mathsf L_{k,m}}2^{B_k+m/2+j}\sum_{I\in\mathcal I_j}\|b_{I,k}\|_{L^1(\R)}\lesssim\sum_{I\in\mathcal I}\|b_{I,k}\|_{L^1(\R)}\lesssim\int_{F_k}|f(x)|\d x.
\end{split} \]

We now turn to the middle range.  We partition the indices of the modulation by setting
\[ Q_{k,\tau}\coloneqq[\tau B_k,(\tau+1)B_k), \qquad \Z=\bigcup_{\tau\in\Z}(Q_{k,\tau}\cap\Z), \]
and we freeze the atom scales over each block via
\[ R_{k,\tau}\coloneqq\bigcup_{m\in Q_{k,\tau}\cap\Z}\mathsf S_{k,m}. \]
The set $\mathsf S_{k,m}$ is an interval of integers of length $2B_k+O(1)$ centered at $-m/2,$ hence $R_{k,\tau}$ is an interval of integers of length $ \leq 5B_k$; after sparsifying to $\tau$ in a single residue class $\mod 10^{10}$, we can assume that the centers of $R_{k,\tau}$ and $R_{k,\tau'}$ are separated by $100B_k$, whenever $\tau \neq \tau'$, and thus every $j\in\Z$ belongs to at most $1$ of the sets $R_{k,\tau}.$

If $m\in Q_{k,\tau}\cap\Z,$ define
\[ \mathsf{Err}_{k,\tau,m}(b)\coloneqq-\sum_{j\in R_{k,\tau}\setminus\mathsf S_{k,m}}b_{j,k}, \]
so that
\[ \sum_{j\in\mathsf S_{k,m}}b_{j,k}=\sum_{j\in R_{k,\tau}}b_{j,k}+\mathsf{Err}_{k,\tau,m}(b). \]
If $j\in R_{k,\tau}\setminus\mathsf S_{k,m},$ then either $j\in\mathsf L_{k,m}$ or $2^m>C_0 2^{2B_k-2j}.$ In the latter case $T_{k,m}b_{j,k}$ vanishes on $\left(\bigcup_{I\in\mathcal I}5I\right)^c$ by the support observation above, while in the former case the estimate already used for the very small range and the triangle inequality give
\[ \begin{split}
&\left\|\sup_{\tau\in\Z}\sup_{m\in Q_{k,\tau}\cap\Z}|T_{k,m}(\mathsf{Err}_{k,\tau,m}(b))|\right\|_{L^1(\R\setminus\bigcup_{I\in\mathcal I}5I)}\\
&\qquad\lesssim\sum_{m\in\Z}\sum_{j\in\mathsf L_{k,m}}2^{B_k+m/2+j}\sum_{I\in\mathcal I_j}\|b_{I,k}\|_{L^1(\R)}\lesssim\int_{F_k}|f(x)|\d x.
\end{split} \]
Consequently, on $\left(\bigcup_{I\in\mathcal I}5I\right)^c,$
\[ \begin{split}
\sup_{m\in\Z}\left|T_{k,m}\left(\sum_{j\in\mathsf S_{k,m}}b_{j,k}\right)\right|\leq&\sup_{\tau\in\Z}\sup_{m\in Q_{k,\tau}\cap\Z}\left|T_{k,m}\left(\sum_{j\in R_{k,\tau}}b_{j,k}\right)\right|\\
&+\sup_{\tau\in\Z}\sup_{m\in Q_{k,\tau}\cap\Z}|T_{k,m}(\mathsf{Err}_{k,\tau,m}(b))|.
\end{split} \]

The main term is treated via Kalton's log-convexity: \[ \begin{split}
&\left|\left\{x\in\left(\bigcup_{I\in\mathcal I}5I\right)^c:\sum_{k\geq0}\sup_{\tau\in\Z}\sup_{m\in Q_{k,\tau}\cap\Z}\left|T_{k,m}\left(\sum_{j\in R_{k,\tau}}b_{j,k}\right)\right|>1\right\}\right|\\
&\qquad\lesssim\sum_{k\geq0}\log_1(k+2)\sum_{\tau\in\Z}\left\|\sup_{m\in Q_{k,\tau}\cap\Z}\left| \cic{1}_{ \R \setminus \bigcup_{I \in \mathcal{I}}5I } T_{k,m}\left(\sum_{j\in R_{k,\tau}}b_{j,k}\right)\right|\right\|_{L^{1,\infty}(\R)}\\
&\qquad\lesssim\sum_{k\geq0}\log_1(k+2)\log_1(B_k)^2\sum_{\tau\in\Z}\left\|\sum_{j\in R_{k,\tau}}b_{j,k}\right\|_{L^1(\R)}\lesssim\sum_{k\geq0}\log_1(k+2)\log_1(B_k)^2\int_{F_k}|f(x)|\d x.
\end{split} \]
Here we first apply \cite[Theorems 3.4 and 3.6]{Kalton1981} to finite partial sums and then pass to the limit, the passage to the penultimate estimate follows from Corollary \ref{c:finitemodulationsweak11} because $\#(Q_{k,\tau}\cap\Z)\lesssim B_k,$ and the disjointness the sets $R_{k,\tau}$ (after sparsification).  Furthermore, for $\mathsf{High}^{\mathsf{lac}}_{\mathsf{A,B}}(b)$ we have that \[ \left| \left\{ x: \mathsf{High}^{\mathsf{lac}}_{\mathsf{A,B}}(b) \gtrsim 1 \right\} \right| \lesssim \left(\sum_{k \geq 0} 2^{-2\beta B_k} \frac{A_k}{\log_1(B_k)^2} \right) \left( \sum_{k \geq 0} \log_1(B_k)^2 \int_{F_k}|f(x)| \d x \right)  \] To conclude the proof, with similar considerations as in the case of the full maximal operator, we take $A_k=2^{2^{2^k}}$ and therefore $B_k= C2^{2^{k}} $  therefore \[ \begin{split}
&	\left|\left\{ x: \mathsf{High}^{\mathsf{lac}}_{\mathsf{A,B}}(b) \gtrsim 1\right\} \right| \lesssim \sum_{k \geq 0} 2^{2k} \int_{F_k}|f(x)| \d x \lesssim \int_{\R} |f(x)| \log_2(|f(x)|)^2 \d x \\ & \left| \left\{x: \mathsf{Low}_{\mathsf{A,B}}^{\mathsf{lac}}(b) \gtrsim 1 \right\} \right| \lesssim \int_{\R} |f(x)| \log_2(|f(x)|)^2 \log_4(|f(x)|) \d x; 
\end{split} \]
the proof is complete.

\end{proof}

		\bibliography{steinendpointbib}
	\bibliographystyle{amsplain}
\end{document}